\newcommand\BZ{\mathbb Z}
\newcommand\BN{\mathbb N}
\def\BHM#1.#2.#3.#4.{{^{#1}_{#3}\mathcal B^{#2}_{#4}}}
\newcommand\comm\curlyvee
\newcommand\cocomm\curlywedge
\newcommand\inv{^{-1}}
\DeclareMathOperator{\class}{class}
\theoremstyle{plain}
\newtheorem{thm}{Theorem}[section]
\newtheorem{cor}[thm]{Corollary}
\newtheorem{lem}[thm]{Lemma}
\theoremstyle{definition}
\newtheorem{df}[thm]{Definition}
\newtheorem{example}[thm]{Example}
\theoremstyle{remark}
\newtheorem{rem}[thm]{Remark}
\newtheorem{conj}[thm]{Conjecture}
\crefname{lem}{Lemma}{Lemmas}
\crefname{thm}{Theorem}{Theorems}
\crefname{cor}{Corollary}{Corollaries}
\crefname{prop}{Proposition}{Propositions}
\crefname{example}{example}{examples}
\crefname{df}{Definition}{Definitions}
\crefname{equation}{equation}{equations}
\numberwithin{equation}{section}
\renewcommand\iff{\Leftrightarrow}
\def\clap#1{\hbox to 0pt{\hss#1\hss}}
\title{The FSZ properties of sporadic simple groups}
\author{Marc Keilberg}
\begin{document}
\begin{abstract}
We investigate a possible connection between the $FSZ$ properties of a group and its Sylow subgroups.  We show that the simple groups $G_2(5)$ and $S_6(5)$, as well as all sporadic simple groups with order divisible by $5^6$ are not $FSZ$, and that neither are their Sylow 5-subgroups.  The groups $G_2(5)$ and $HN$ were previously established as non-$FSZ$ by Peter Schauenburg; we present alternative proofs.  All other sporadic simple groups and their Sylow subgroups are shown to be $FSZ$.  We conclude by considering all perfect groups available through GAP with order at most $10^6$, and show they are non-$FSZ$ if and only if their Sylow 5-subgroups are non-$FSZ$.
\end{abstract}
\subjclass[2010]{Primary: 20D08; Secondary: 20F99, 16T05, 18D10}
\keywords{sporadic groups, simple groups, Monster group, Baby Monster group, Harada-Norton group, Lyons group, projective symplectic group, higher Frobenius-Schur indicators, FSZ groups, Sylow subgroups}
\email{keilberg@usc.edu}
\thanks{This work is in part an outgrowth of an extended e-mail discussion between Geoff Mason, Susan Montgomery, Peter Schauenburg, Miodrag Iovanov, and the author.  The author thanks everyone involved for their contributions, feedback, and encouragement.}
\maketitle

%%%%%%%%%%%%%%%%%%%%%%%%%%%%%%%%%%%%%%%%%%%%%%%
\section*{Introduction}
The $FSZ$ properties for groups, as introduced by \citet{IMM}, arise from considerations of certain invariants of the representation categories of semisimple Hopf algebras known as higher Frobenius-Schur indicators \citep{KSZ2,NS08,NS07b}.  See \citep{NegNg16} for a detailed discussion of the many important uses and generalizations of these invariants.  When applied to Drinfeld doubles of finite groups, these invariants are described entirely in group theoretical terms, and are in particular invariants of the group itself.  The $FSZ$ property is then concerned with whether or not these invariants are always integers---which gives the $Z$ in $FSZ$.

While the $FSZ$ and non-$FSZ$ group properties are well-behaved with respect to direct products \citep[Example 4.5]{IMM}, there is currently little reason to suspect a particularly strong connection to proper subgroups which are not direct factors.  Indeed, by \citep{IMM,Etingof:SymFSZ} the symmetric groups $S_n$ are $FSZ$, while there exist non-$FSZ$ groups of order $5^6$.  Therefore, $S_n$ is $FSZ$ but contains non-$FSZ$ subgroups for all sufficiently large $n$.  On the other hand, non-$FSZ$ groups can have every proper subquotient be $FSZ$.  Even the known connection to the one element centralizers---see the comment following \cref{df:gm-sets}---is relatively weak.  In this paper we will establish a few simple improvements to this situation, and then proceed to establish a number of examples of $FSZ$ and non-$FSZ$ groups that support a potential connection to Sylow subgroups.  We propose this connection as \cref{conj}.

We will make extensive use of \citet{GAP4.8.4} and the \verb"AtlasRep"\cite{AtlasRep} package.  Most of the calculations were designed to be completed with only 2GB of memory or (much) less available---in particular, using only a 32-bit implementation of GAP---, though in a few cases a larger workspace was necessary.  In all cases the calculations can be completed in workspaces with no more than 10GB of memory available.  The author ran the code on an {\tt Intel(R) Core(TM) i7-4770 CPU @ 3.40GHz} machine with 12GB of memory.  All statements about runtime are made with respect to this computer.  Most of the calculations dealing with a particular group were completed in a matter of minutes or less, though calculations that involve checking large numbers of groups can take several days or more across multiple processors.

The structure of the paper is as follows.  We introduce the relevant notation, definitions, and background information in \cref{sec:notation}.  In \cref{sec:sub} we present a few simple results which offer some connections between the $FSZ$ (or non-$FSZ$) property of $G$ and certain of its subgroups.  This motivates the principle investigation of the rest of the paper: comparing the $FSZ$ properties for certain groups and their Sylow subgroups.  In \cref{sec:functions} we introduce the core functions we will need to perform our calculations in GAP.  We also show that all groups of order less than 2016 (except possibly those of order 1024) are $FSZ$.  The remainder of the paper will be dedicated to exhibiting a number of examples that support \cref{conj}.

In \cref{sec:simple} we show that the simple groups $G_2(5)$, $HN$, $Ly$, $B$, and $M$, as well as their Sylow 5-subgroups, are all non-$FSZ_5$.  In \cref{sec:fischer} we show that all other sporadic simple groups (including the Tits group) and their Sylow subgroups are $FSZ$.  This is summarized in \cref{thm:summary}.  The case of the simple projective symplectic group $S_6(5)$ is handled in \cref{sec:symplectic}, which establishes $S_6(5)$ as the second smallest non-$FSZ$ simple group after $G_2(5)$.  It follows from the investigations of \citet{PS16} that $HN$ is then the third smallest non-$FSZ$ simple group.  $S_6(5)$ was not susceptible to the methods of \citet{PS16}, and requires further modifications to our own methods to complete in reasonable time.  We finish our examples in \cref{sec:perfect} by examining those perfect groups available through GAP, and show that they are $FSZ$ if and only if their Sylow subgroups are $FSZ$. Indeed, they are non-$FSZ$ if and only if their Sylow 5-subgroup is non-$FSZ_5$.

Of necessity, these results also establish that various centralizers and maximal subgroups in the groups in question are also non-$FSZ_5$, which can be taken as additional examples.  If the reader is interested in $FSZ$ properties for other simple groups, we note that \citet{PS16} has checked all simple groups of order at most $|HN| = \numprint{273030912000000} = 2^{14} \cdot 3^6\cdot 5^6 \cdot 7 \cdot 11 \cdot 19$, except for $S_6(5)$ (which we resolve here); and that several families of simple groups were established as $FSZ$ by \citet{IMM}.

We caution the reader that the constant recurrence of the number $5$ and Sylow 5-subgroups of order $5^6$ in this paper is currently more of a computationally convenient coincidence than anything else.  The reasons for this will be mentioned during the course of the paper.

%%%%%%%%%%%%%%%%%%%%%%%%%%%%%%%%%%%%%%%%%%%%%%%%%%%%%%%%%%%%%%%%%%%%
\section{Background and Notation}\label{sec:notation}
Let $\BN$ be the set of positive integers.  The study of $FSZ$ groups is connected to the following sets.
\begin{df}\label{df:gm-sets}
  Let $G$ be a group, $u,g\in G$, and $m\in \BN$.  Then we define
  \[ G_m(u,g) = \{ a\in G \ : \ a^m = (au\inv)^m = g\}.\]
\end{df}
Note that $G_m(u,g)=\emptyset$ if $u\not\in C_G(g)$, and that in all cases $G_m(u,g)\subseteq C_G(g)$.  In particular, letting $H=C_G(g)$, then when $u\in H$ we have
\[ G_m(u,g)=H_m(u,g).\]

The following will then serve as our definition of the $FSZ_m$ property.  It's equivalence to other definitions follows easily from \citep[Corollary 3.2]{IMM} and applications of the Chinese remainder theorem.
\begin{df}\label{df:FSZ}
  A group $G$ is $FSZ_m$ if and only if for all $g\in G$, $u\in C_G(g)$, and $n\in\BN$ coprime to the order of $g$, we have
  \[ |G_m(u,g)| = |G_m(u,g^n)|.\] We say a group is $FSZ$ if it is $FSZ_m$ for all $m$.
\end{df}
The following result is useful for reducing the investigation of the $FSZ$ properties to the level of conjugacy classes or even rational classes.
\begin{lem}\label{cor:u-g-exchange}
  For any group $G$ and $u,g,x\in G$ we have a bijection $G_m(u,g)\to G_m(u^x,g^x)$ given by $a\mapsto a^x$.

  If $n\in\BN$ is coprime to $|G|$ and $r\in\BN$ is such that $rn\equiv 1\bmod |G|$, we also have a bijection $G_m(u,g^n)\to G_m(u^r,g)$.
\end{lem}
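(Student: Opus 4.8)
The first statement is routine. Conjugation by $x$ is an automorphism of $G$ that commutes with the $m$th power map, sends $a$ to $a^x$ and $au^{-1}$ to $(au^{-1})^x=a^x(u^x)^{-1}$, and sends $g$ to $g^x$; hence it restricts to a map $G_m(u,g)\to G_m(u^x,g^x)$, with inverse conjugation by $x^{-1}$. For the second statement the natural guess is $\Phi\colon a\mapsto a^r$. Since $\gcd(r,|G|)=1$ we have $y^{rn}=y$ for every $y\in G$ (because $\operatorname{ord}(y)\mid|G|$ and $rn\equiv 1\pmod{|G|}$), so $x\mapsto x^r$ is a permutation of $G$ with inverse $x\mapsto x^n$; in particular $\Phi$ is injective on $G_m(u,g^n)$ and $c\mapsto c^n$ is its only possible inverse. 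It remains to check that these maps carry one set into the other. One may assume $u\in C_G(g^n)=C_G(g)$ (otherwise $G_m(u,g^n)$ and $G_m(u^r,g)$ are both empty, since $u\in C_G(g)\iff u^r\in C_G(g)$), and then it suffices to prove the single implication
\[ a\in G_m(u,g^n)\ \Longrightarrow\ a^r\in G_m(u^r,g), \]
as the same argument with $(u^r,g,n)$ in place of $(u,g^n,r)$ shows that $c\mapsto c^n$ maps $G_m(u^r,g)$ back into $G_m(u,g^n)$.

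So let $a\in G_m(u,g^n)$. The first defining equation is immediate: $(a^r)^m=(a^m)^r=(g^n)^r=g^{nr}=g$. The essential point is the second, $(a^ru^{-r})^m=g$, and for this I would work inside $H:=C_G(g)$. As $a^m=(au^{-1})^m=g^n$, both $a$ and $au^{-1}$ — and therefore $u$, $a^r$, $u^r$, and $a^ru^{-r}$ — lie in $C_G(g^n)=H$, in which $g$ (hence $g^n$) is central. Now $(au^{-1})^r$ has $m$th power $\bigl((au^{-1})^m\bigr)^r=(g^n)^r=g$, which is central in $H$; so it is enough to show that $a^ru^{-r}$ is conjugate \emph{in $H$} to $(au^{-1})^r$, for then $(a^ru^{-r})^m$ is $H$-conjugate to the central element $g$ and so equals $g$.

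To produce this conjugacy I would analyze $K:=\langle a,u\rangle\le H$ through the inner automorphism $\tau$ of $K$ given by conjugation by $a$: since $a^m=g^n$ is central in $K$ we have $\tau^m=\operatorname{id}_K$, so $d:=\operatorname{ord}(\tau)$ divides both $m$ and $\operatorname{ord}(a)\mid|G|$. Collecting the copies of $a$ to the left yields the identity $(au^{-1})^m=\bigl(\tau^m(u)\tau^{m-1}(u)\cdots\tau(u)\bigr)^{-1}a^m$, so the hypothesis $(au^{-1})^m=a^m$ is precisely the relation $\tau^m(u)\tau^{m-1}(u)\cdots\tau(u)=1$ in $K$; applying the same computation with $a^r$, $u^r$, $\tau^r$ in place of $a$, $u$, $\tau$ shows that the target equation $(a^ru^{-r})^m=(a^r)^m$ is equivalent to $\tau^{rm}(u)^r\,\tau^{r(m-1)}(u)^r\cdots\tau^{r}(u)^r=1$. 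Passing from the first relation to the second is exactly where $\gcd(r,|G|)=1$, hence $\gcd(r,d)=1$, is used: it lets one re-index the cyclic product over the $\tau$-orbit of $u$ and replace each factor by an $r$th power. Carrying out this bookkeeping, and verifying that it genuinely exhibits $a^ru^{-r}$ as an $H$-conjugate of $(au^{-1})^r$ rather than merely matching their $m$th powers by an unrelated computation, is the step I expect to be the main obstacle; once it is done, $c\mapsto c^n$ is a two-sided inverse and the asserted bijection follows.
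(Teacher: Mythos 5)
The paper gives no argument of its own here: both halves are dispatched by citation, the first to \citep[Proposition~7.2]{KSZ2} and the second to \citep[Corollary~5.5]{PS:Quasitensor}. So the real question is whether your blind attempt actually proves the statement, and on the second half it does not.

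Your first paragraph is fine: conjugation by $x$ is an automorphism commuting with $m$th powers, so $a\mapsto a^x$ carries $G_m(u,g)$ bijectively onto $G_m(u^x,g^x)$. Your reductions at the start of the second half are also sound — $C_G(g)=C_G(g^n)$ since $g$ and $g^n$ generate the same cyclic group, so one may assume $u\in C_G(g)$, and it suffices to show $a\mapsto a^r$ carries $G_m(u,g^n)$ \emph{into} $G_m(u^r,g)$, the inverse being $c\mapsto c^n$ by symmetry. The check $(a^r)^m=g$ is immediate.

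The gap is exactly where you flag it: the equality $(a^ru^{-r})^m=g$. Your proposed route is to show that $a^ru^{-r}$ is $H$-conjugate to $(au^{-1})^r$ (whose $m$th power is visibly $g$) and then use centrality of $g$ in $H$. But you never establish that conjugacy; your reduction turns it into the assertion that, with $\sigma$ denoting conjugation by $a$, the twisted product $\sigma^{r-1}(b)\cdots\sigma(b)b$ (where $b=au^{-1}$) is $H$-conjugate to $b^r$, and the ``re-indexing the cyclic product'' you invoke proves at most that the multiset of exponents $\{0,r,2r,\dots\}$ modulo $d=\operatorname{ord}(\sigma)$ is a permutation of $\{0,1,\dots\}$; since the factors $\sigma^k(b)$ do not commute, rearranging them is not a legal move, and the resulting products need not be conjugate. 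So the step you describe as ``the main obstacle'' is in fact unproven, and it is not a matter of bookkeeping: even the weaker statement that $(a^ru^{-r})^m$ lies in $\langle g\rangle$ is not obvious from what you write. I would also caution that it is not clear a priori that $a\mapsto a^r$ is the bijection furnished by the cited Corollary~5.5; that reference may well prove equality of cardinalities by a character-theoretic or Galois-twist argument without producing this particular map on elements. As it stands, the second half of the lemma is asserted, not proved.
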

\begin{proof}
  The first part is \citep[Proposition 7.2]{KSZ2} in slightly different notation.  The second part is \citep[Corollary 5.5]{PS:Quasitensor}.
\end{proof}

All expressions of the form $G_m(u,g^n)$ will implicitly assume that $n$ is coprime to the order of $g$.  We are free to replace $n$ with an equivalent value which is coprime to $|G|$ whenever necessary.  Moreover, when computing cardinalities $|G_m(u,g)|$ it suffices to compute the cardinalities $|H_m(u,g)|$ for $H=C_G(g)$, instead.  This latter fact is very useful when attempting to work with groups of large order, or groups with centralizers that are easy to compute in, especially when the group is suspected of being non-$FSZ$.

\begin{rem}
  There are stronger conditions called $FSZ_m^+$, the union of which yields the $FSZ^+$ condition, which are also introduced by \citet{IMM}.  The $FSZ_m^+$ condition is equivalent to the centralizer of every non-identity element with order not in $\{1,2,3,4,6\}$ being $FSZ_m$, which is in turn equivalent to the sets $G_m(u,g)$ and $G_m(u,g^n)$ being isomorphic permutation modules for the two element centralizer $C_G(u,g)$ \citep[Theorem 3.8]{IMM},  with $u,g,n$ satisfying the same constraints as for the $FSZ_m$ property.  Here the action is by conjugation.  We note that while the $FSZ$ property is concerned with certain invariants being in $\BZ$, the $FSZ^+$ property is not concerned with these invariants being non-negative integers.  When the invariants are guaranteed to be non-negative is another area of research, and will also not be considered here.
\end{rem}

\begin{example}
  The author has shown that quaternion groups and certain semidirect products defined from cyclic groups are always $FSZ$ \citep{K,K2}.  This includes the dihedral groups, semidihedral groups, and quasidihedral groups, among many others.
\end{example}
\begin{example}
  \citet{IMM} showed that several groups and families of groups are $FSZ$, including:
  \begin{itemize}
    \item All regular $p$-groups.
    \item $\BZ_p\wr_r\BZ_p$, the Sylow $p$-subgroup of $S_{p^2}$, which is an irregular $FSZ$ $p$-group.
    \item $PSL_2(q)$ for a prime power $q$.
    \item Any direct product of $FSZ$ groups.  Indeed, any direct product of $FSZ_m$ groups is also $FSZ_m$, as the cardinalities of the sets in \cref{df:gm-sets} split over the direct product in an obvious fashion.
    \item The Mathieu groups $M_{11}$ and $M_{12}$.
    \item Symmetric and alternating groups.  See also \citep{Etingof:SymFSZ}.
  \end{itemize}
  Because of the first item, Susan Montgomery has proposed that we use the term $FS$-regular instead of $FSZ$, and $FS$-irregular for non-$FSZ$.  Similarly for $FS_m$-regular and $FS_m$-irregular.  These seem reasonable choices, but for this paper the author will stick with the existing terminology.
\end{example}
\begin{example}
  On the other hand, \citet{IMM} also established that non-$FSZ$ groups exist by using \citet{GAP4.8.4} to show that there are exactly 32 isomorphism classes of groups of order $5^6$ which are not $FSZ_5$.
\end{example}
\begin{example}
  The author has constructed examples of non-$FSZ_{p^j}$ $p$-groups for all primes $p>3$ and $j\in\BN$ in \citep{K16:p-examples}.  For $j=1$ these groups have order $p^{p+1}$, which is the minimum possible order for any non-$FSZ$ $p$-group.  Combined, \citep{IMM,K16:p-examples,PS16} show, among other things, that the minimum order of non-$FSZ$ 2-groups is at least $2^{10}$, and the minimum order for non-$FSZ$ 3-groups is at least $3^8$.  It is unknown if any non-$FSZ$ 2-groups or 3-groups exist, however.
\end{example}
\begin{example}
  \citet{PS16} provides several equivalent formulations of the $FSZ_m$ properties, and uses them to construct \citet{GAP4.8.4} functions which are useful for testing the property.  Using these functions, it was shown that the Chevalley group $G_2(5)$ and the sporadic simple group $HN$ are not $FSZ_5$.  These groups were attacked directly, using advanced computing resources for $HN$, often with an eye on computing the values of the indicators explicitly.  We will later present an alternative way of using GAP to prove that these groups, and their Sylow 5-subgroups, are not $FSZ_5$.  We will not attempt to compute the actual values of the indicators, however.
\end{example}

One consequence of these examples is that the smallest known order for a non-$FSZ$ group is $5^6=\numprint{15625}$.  The groups with order divisible by $p^{p+1}$ for $p>5$ that are readily available through GAP are small in number, problematically large, and frequently do not have convenient representations.  Matrix groups have so far proven too memory intensive for what we need to do, so we need permutation or polycyclic presentations for accessible calculations.  For these reasons, all of the examples we pursue in the following sections will hone in on the non-$FSZ_5$ property for groups with order divisible by $5^6$, and which admit known or reasonably computable permutation representations.  In most of the examples, $5^6$ is the largest power of $5$ dividing the order, with the Monster group, the projective symplectic group $S_6(5)$, and the perfect groups of order $12\cdot 5^7$ being the exceptions.

%%%%%%%%%%%%%%%%%%%%%%%%%%%%%%%%%%%%%%%%%%%%%%%%%%%%%%%%%%%%%%%%%%%%%%%%%%%%%%%%%%%%%%%%%%%%%%
\section{Obtaining the non-\ensuremath{FSZ} property from certain subgroups}\label{sec:sub}
Our first elementary result offers a starting point for investigating non-$FSZ_m$ groups of minimal order.
\begin{lem}\label{lem:minorder}
  Let $G$ be a group with minimal order in the class of non-$FSZ_{m}$ groups. Then $|G_{m}(u,g)|\neq |G_{m}(u,g^n)|$ for some $(n,|G|)=1$ implies $g\in Z(G)$.
\end{lem}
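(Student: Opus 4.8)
The plan is to argue by contradiction: suppose $G$ has minimal order among non-$FSZ_m$ groups, and suppose there exist $u \in C_G(g)$ and $n$ coprime to $|G|$ with $|G_m(u,g)| \neq |G_m(u,g^n)|$, but $g \notin Z(G)$. The goal is to produce a smaller non-$FSZ_m$ group, contradicting minimality. The natural candidate is $H = C_G(g)$, which is a proper subgroup of $G$ precisely because $g \notin Z(G)$. So the whole argument hinges on transferring the failure of the $FSZ_m$ condition from $G$ down to $H$.

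The key observation is the remark made just after \cref{df:gm-sets}: since $u \in C_G(g) = H$ and $g \in H$, we have $G_m(u,g) = H_m(u,g)$, and likewise $G_m(u, g^n) = H_m(u, g^n)$ for any relevant $n$. So the two cardinalities that differ in $G$ are literally the same two cardinalities computed inside $H$. The only subtlety is that the $FSZ_m$ condition for $H$ quantifies over $n$ coprime to $|H|$, not $|G|$; but since $|H|$ divides $|G|$, any $n$ coprime to $|G|$ is a fortiori coprime to $|H|$, and in particular coprime to the order of $g$ (which divides $|H|$). Hence the witnesses $u, g, n$ that show $G$ fails $FSZ_m$ also show, verbatim, that $H$ fails $FSZ_m$: we have $u \in C_H(g)$, $n$ coprime to $|H|$, and $|H_m(u,g)| = |G_m(u,g)| \neq |G_m(u,g^n)| = |H_m(u,g^n)|$.

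Therefore $H = C_G(g)$ is a non-$FSZ_m$ group with $|H| < |G|$, contradicting the minimality of $|G|$. This forces $g \in Z(G)$, completing the proof. There is essentially no hard step here: the result is really just the contrapositive packaging of the elementary identity $G_m(u,g) = C_G(g)_m(u,g)$ together with the divisibility $|C_G(g)| \mid |G|$. The only thing one must be slightly careful about is the coprimality bookkeeping — ensuring that passing from "coprime to $|G|$" to "coprime to $|H|$" (and to $|g|$) goes in the correct, harmless direction — but this is immediate since $|H| \mid |G|$. If one wanted to be maximally careful one could instead invoke \cref{cor:u-g-exchange} to first replace $n$ by a value coprime to $|G|$, but since the hypothesis already supplies such an $n$, no replacement is needed.
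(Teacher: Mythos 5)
Your proof is correct and follows the paper's argument exactly: the contradiction is obtained by observing that if $g\notin Z(G)$ then $C_G(g)$ is a proper subgroup, and $G_m(u,g)=C_G(g)_m(u,g)$ transfers the failure of $FSZ_m$ to the smaller group. Your extra care with the coprimality bookkeeping is sound but, as you note yourself, essentially automatic since $|C_G(g)|$ divides $|G|$.
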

\begin{proof}
  If not then $C_G(g)$ is a smaller non-$FSZ_{m}$ group, a contradiction.
\end{proof}
The result applies to non-$FSZ_m$ groups in a class that is suitably closed under the taking of centralizers.  For example, we have the following version for $p$-groups.
\begin{cor}\label{cor:minorder}
  Let $P$ be a $p$-group with minimal order in the class of non-$FSZ_{p^j}$ $p$-groups. Then $|P_{p^j}(u,g)|\neq |P_{p^j}(u,g^n)|$ for some $p\nmid n$ implies $g\in Z(P)$.
\end{cor}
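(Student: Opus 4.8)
The plan is to run the same argument as in the proof of \cref{lem:minorder}, exploiting the one fact that makes it work in this setting: the class of $p$-groups is closed under passing to subgroups, hence under passing to centralizers. So suppose, for contradiction, that $g\notin Z(P)$. Then $H:=C_P(g)$ is a proper subgroup of $P$, and therefore a $p$-group of strictly smaller order than $P$; by the minimality of $|P|$ in the class of non-$FSZ_{p^j}$ $p$-groups, $H$ must be $FSZ_{p^j}$.

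Next I would transfer the hypothesis into $H$. Since $P$ is a $p$-group, the order of $g$ is a power of $p$, so $p\nmid n$ forces $(n,|g|)=1$ and $\cyc{g^n}=\cyc{g}$; in particular $C_P(g^n)=C_P(g)=H$. Both $P_{p^j}(u,g)$ and $P_{p^j}(u,g^n)$ lie in the respective centralizers $C_P(g)=C_P(g^n)=H$, and are moreover empty unless $u\in H$ (by the remarks following \cref{df:gm-sets}); since the two cardinalities are assumed to differ, neither set can be empty, so $u\in H$. The same remarks then give $P_{p^j}(u,g)=H_{p^j}(u,g)$ and $P_{p^j}(u,g^n)=H_{p^j}(u,g^n)$. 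Finally, $g\in Z(H)$, so $u\in C_H(g)$, and the $FSZ_{p^j}$ property of $H$ applied to $g$, $u$, and $n$ yields $|H_{p^j}(u,g)|=|H_{p^j}(u,g^n)|$, whence $|P_{p^j}(u,g)|=|P_{p^j}(u,g^n)|$, contradicting the hypothesis. Therefore $g\in Z(P)$.

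I do not anticipate a genuine obstacle: this is essentially the $p$-group instance of the observation recorded after \cref{lem:minorder} that the argument applies to any class of groups closed under taking centralizers. The only two points requiring care are that a subgroup of a $p$-group is again a $p$-group (so that minimality genuinely applies to $C_P(g)$) and that $u$ must centralize $g$ for $|P_{p^j}(u,g)|$ and $|P_{p^j}(u,g^n)|$ to differ at all, and both are immediate.
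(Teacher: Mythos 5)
Your argument is exactly the paper's: since $p$-groups are closed under taking centralizers, if $g\notin Z(P)$ then $C_P(g)$ is a strictly smaller $p$-group, hence $FSZ_{p^j}$ by minimality, and the sets $P_{p^j}(u,g)$, $P_{p^j}(u,g^n)$ coincide with those computed in $C_P(g)$, giving a contradiction. The extra care you take (that $u$ must lie in $C_P(g)$ for the cardinalities to differ, and that $p\nmid n$ gives $(n,|g|)=1$) is correct and just makes explicit what the paper leaves implicit.
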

%\begin{proof}
%  If not then $C_P(g)$ is a smaller non-$FSZ_{p^j}$ $p$-group, a contradiction.
%\end{proof}

\begin{example}
  From the examples in the previous section, we know the minimum possible order for a non-$FSZ_p$ $p$-group for $p>3$ is $p^{p+1}$.  It remains unknown if the examples of non-$FSZ_{p^j}$ $p$-groups from \citep{K16:p-examples} for $j>1$ have minimal order among non-$FSZ_{p^j}$ $p$-groups.  We also know that to check if a group of order $2^{10}$ or $3^8$ is $FSZ$ it suffices to assume that $g$ is central.
\end{example}

Next, we determine a condition for when the non-$FSZ$ property for a normal subgroup implies the non-$FSZ$ property for the full group.

\begin{lem}
  Let $G$ be a group and suppose $H$ is a non-$FSZ_m$ normal subgroup with $m$ coprime to $[G:H]$.  Then $G$ is non-$FSZ_m$.
\end{lem}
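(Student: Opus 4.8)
The plan is to take the data witnessing that $H$ is non-$FSZ_m$ and show that the very same data witnesses that $G$ is non-$FSZ_m$. Concretely, since $H$ is non-$FSZ_m$ there exist $g\in H$, $u\in C_H(g)$, and $n\in\BN$ coprime to the order of $g$ such that $|H_m(u,g)|\neq |H_m(u,g^n)|$. Because $u\in C_H(g)\subseteq C_G(g)$ and the order of $g$ is the same whether $g$ is regarded in $H$ or in $G$, the pair $(u,g)$ together with the exponent $n$ is a legitimate candidate for testing the $FSZ_m$ property of $G$. So it suffices to establish the set identities $G_m(u,g)=H_m(u,g)$ and $G_m(u,g^n)=H_m(u,g^n)$, after which comparing cardinalities finishes the proof.

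The inclusions $H_m(u,g)\subseteq G_m(u,g)$ and $H_m(u,g^n)\subseteq G_m(u,g^n)$ are immediate from \cref{df:gm-sets}. For the reverse inclusions, the key observation — and the only place the hypothesis $(m,[G:H])=1$ is used — is that any $a\in G$ with $a^m\in H$ already lies in $H$. Indeed, passing to the quotient $G/H$, the image $\bar a$ satisfies $\bar a^{\,m}=\overline{a^m}=\bar 1$, so the order of $\bar a$ divides $m$; it of course also divides $[G:H]$; since $\gcd(m,[G:H])=1$ this forces $\bar a=\bar 1$, i.e.\ $a\in H$. Applying this with $a^m=g\in H$ (resp.\ $a^m=g^n\in H$, noting $g^n\in H$ since $g\in H$) shows that every element of $G_m(u,g)$ (resp.\ of $G_m(u,g^n)$) lies in $H$. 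Since $u\in H$ as well, for such an $a$ we also have $au\inv\in H$, so $a$ satisfies the defining conditions of \cref{df:gm-sets} within $H$; hence $a\in H_m(u,g)$ (resp.\ $a\in H_m(u,g^n)$), giving the desired equalities.

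Combining, $|G_m(u,g)|=|H_m(u,g)|\neq|H_m(u,g^n)|=|G_m(u,g^n)|$, so $G$ fails the $FSZ_m$ condition at $(u,g)$ with exponent $n$, and therefore $G$ is non-$FSZ_m$. There is essentially no obstacle here beyond correctly tracking the definitions; the one substantive point is the quotient/order argument showing $a^m\in H\Rightarrow a\in H$, which is exactly where coprimality of $m$ and $[G:H]$ enters. Without that hypothesis the sets $H_m(u,g)$ could be proper subsets of the ambient sets $G_m(u,g)$ and the conclusion would fail in general.
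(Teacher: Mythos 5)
Your proof is correct and follows essentially the same route as the paper: exhibit witnesses $u,g,n$ in $H$ and use the coprimality of $m$ and $[G:H]$ (via the quotient $G/H$) to show $a^m\in H\Rightarrow a\in H$, whence $G_m(u,g)=H_m(u,g)$ and $G_m(u,g^n)=H_m(u,g^n)$. The only difference is that you spell out the order argument in $G/H$ that the paper leaves implicit.
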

\begin{proof}
  Let $u,g\in H$ and $(n,|g|)=1$ be such that $|H_m(u,g)|\neq |H_m(u,g^n)|$.  By the index assumption, for all $x\in G$ we have $x^m\in H \iff x\in H$, so by definitions $G_m(u,g)=H_m(u,g)$ and $G_m(u,g^n)=H_m(u,g^n)$, which gives the desired result.
\end{proof}

\begin{cor}\label{cor:normalSylow}
  Let $G$ be a finite group and suppose $P$ is a normal non-$FSZ_{p^j}$ Sylow $p$-subgroup of $G$ for some prime $p$.  Then $G$ is non-$FSZ_{p^j}$.
\end{cor}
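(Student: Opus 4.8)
The plan is to deduce this immediately from the preceding lemma, which states that if $H$ is a non-$FSZ_m$ normal subgroup of $G$ with $m$ coprime to $[G:H]$, then $G$ is non-$FSZ_m$. So I would apply that lemma with $H = P$ and $m = p^j$.

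The only thing to check is the coprimality hypothesis $\gcd(p^j, [G:P]) = 1$. But this is exactly the defining feature of a Sylow $p$-subgroup: $|P|$ is the full $p$-part of $|G|$, so $[G:P] = |G|/|P|$ is coprime to $p$, hence coprime to every power $p^j$. Combined with the hypothesis that $P$ is normal and non-$FSZ_{p^j}$, all hypotheses of the lemma are satisfied, and the conclusion that $G$ is non-$FSZ_{p^j}$ follows.

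There is no real obstacle here; the corollary is a direct specialization of the lemma to the Sylow setting, the point being simply that the Sylow condition automatically supplies the coprimality needed for the index argument (namely that $x^{p^j} \in P \iff x \in P$ for all $x \in G$).
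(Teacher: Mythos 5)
Your proof is correct and matches the paper's intent exactly: the corollary is stated as an immediate consequence of the preceding lemma, applied with $H=P$ and $m=p^j$, the Sylow condition supplying the coprimality of $p^j$ with $[G:P]$. Nothing further is needed.
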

\begin{cor}\label{cor:normalizer}
  Let $G$ be a finite group and $P$ a non-$FSZ_{p^j}$ Sylow $p$-subgroup of $G$.  Then the normalizer $N_G(P)$ is non-$FSZ_{p^j}$.
\end{cor}
Sadly, we will find no actual use for \cref{cor:normalSylow} in the examples we consider in this paper.  However, this result, \citep[Lemma 8.7]{PS16}, and the examples we collect in the remainder of this paper do suggest the following conjectural relation for the $FSZ$ property.
\begin{conj}\label{conj}
  A group is $FSZ$ if and only if all of its Sylow subgroups are $FSZ$.
\end{conj}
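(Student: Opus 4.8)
Since \cref{conj} is offered as a conjecture, what follows is a proposed program of attack rather than a proof: I indicate how one might establish each of the two implications and where the essential difficulty lies.

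The implication toward which the existing partial results point (\cref{cor:normalSylow} and \cref{cor:normalizer}) is that a non-$FSZ$ Sylow subgroup forces $G$ to be non-$FSZ$; the plan is to remove the normality hypothesis from \cref{cor:normalSylow}. Suppose $P\in\mathrm{Syl}_p(G)$ is non-$FSZ_{p^j}$, witnessed by $u,g\in P$ and $n$ with $|P_{p^j}(u,g)|\ne|P_{p^j}(u,g^n)|$. The first task is to reduce to the case $g\in Z(P)$: this holds automatically when $P$ has minimal order by \cref{cor:minorder}, but in general it is not automatic, since replacing $P$ by $C_P(g)$ (which does inherit the failure) destroys the Sylow property, and for a non-central $g$ the group $C_P(g)$ need not be a Sylow $p$-subgroup of $C_G(g)$. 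Granting $g\in Z(P)$, the subgroup $P$ is a Sylow $p$-subgroup of $H:=C_G(g)$, and since $u\in P\subseteq H$ the centralizer reduction recorded after \cref{cor:u-g-exchange} identifies $G_{p^j}(u,g)$ with $H_{p^j}(u,g)$, and likewise for $g^n$. As $g$ is a $p$-element, every $a$ occurring in $H_{p^j}(u,g)$ is a $p$-element of $H$; the obstruction is that such an $a$ need not lie in $P$ but only in some $H$-conjugate of it, so one is comparing a count spread over all of $\mathrm{Syl}_p(H)$ against the count inside the single group $P$. I would run a local counting argument here — sum $|Q_{p^j}(u,g)|$ over $Q\in\mathrm{Syl}_p(H)$, weighting by the conjugation-invariant multiplicity with which a given element is covered — and try to show that the resulting normalization is the same for $g$ and for $g^n$, so that the strict inequality survives into $H$, hence into $G$. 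Showing that these multiplicities really agree for $g$ and $g^n$, and handling witnesses that cannot be pushed into the center of a Sylow subgroup, are where the work concentrates.

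For the converse — $G$ non-$FSZ$ implies some Sylow subgroup is non-$FSZ$ — the natural move is induction on $|G|$ via \cref{lem:minorder}. If the offending $g$ is not central, then $H:=C_G(g)$ is a proper non-$FSZ_{p^j}$ subgroup, so by induction some Sylow $q$-subgroup $S$ of $H$ is non-$FSZ$ (for some prime $q$, possibly different from $p$); but $S$ only sits inside a Sylow $q$-subgroup of $G$, and since the $FSZ$ property is not inherited by arbitrary subgroups — the symmetric groups $S_n$ are $FSZ$ yet contain non-$FSZ$ subgroups — this does not by itself conclude. One needs the extra input that $S$ is an honest Sylow subgroup of $G$, which happens exactly when $C_G(g)$ contains a full Sylow $q$-subgroup of $G$; arranging this, together with the base case $g\in Z(G)$ (which is essentially a converse of the transfer step above), is the crux. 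For the central case I would again attempt the local counting bookkeeping, now run in the reverse direction, propagating the failure from $G$ down to $P\in\mathrm{Syl}_p(G)$.

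In both directions the bottleneck is the same structural gap: the sets $G_m(u,g)$ are governed by the centralizer $C_G(g)$, but moving between $C_G(g)$ and an actual Sylow subgroup requires understanding how the $p$-power-order solutions of $a^{p^j}=g=(au\inv)^{p^j}$ distribute over the conjugate Sylow subgroups, and whether that distribution is stable under replacing $g$ by $g^n$. I expect a proof would have to isolate and establish a clean local counting identity expressing $|G_{p^j}(u,g)|$ in terms of Sylow-subgroup data; producing such an identity is, as far as I can see, the essential missing step, which is why the statement is recorded here only as a conjecture.
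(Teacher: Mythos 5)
This statement is a conjecture: the paper offers no proof of it, only supporting examples and a heuristic discussion of the obstructions, so there is no proof for your proposal to be measured against, and you were right to present a program rather than claim a proof. Your analysis of where the difficulty lies essentially coincides with the paper's own remarks following the conjecture: the paper likewise reduces to $g$ of $p$-power order, writes $G_{p^j}(u,g)=\bigcup_x \bigl(G_{p^j}(u,g)\cap P^x\bigr)$ over the conjugates of a Sylow $p$-subgroup $P$ of $C_G(g)$, and observes that a bijection $P_{p^j}(u,g)\to P_{p^j}(u,g^n)$ coming from $P$ being $FSZ_{p^j}$ gives no control over the individual conjugates, so that any affirmative argument must, via inclusion--exclusion, control how the intersections of arbitrarily many Sylow $p$-subgroups contribute to the two sets --- exactly the ``clean local counting identity'' you isolate as the missing step. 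Your sketch of the forward direction is sound as far as it goes (for $g\in Z(P)$ the reductions $P\in\mathrm{Syl}_p(C_G(g))$ and $G_{p^j}(u,g)=H_{p^j}(u,g)$ are correct, and the paper's own data for the Baby Monster, where $P_5(u,g^2)=\emptyset$ yet $B_5(u,g^2)$ has $3125$ elements contributed entirely by nontrivial conjugates of $P$, shows concretely why the weighting you propose cannot be uniform in any naive sense). One aspect of the paper's discussion is absent from your proposal: the conjecture concerns the full $FSZ$ property, so besides the prime-power case one must also show that the $FSZ_m$ conditions for $m$ with several prime divisors, and for $u,g$ whose orders are not prime powers, are all forced by the $FSZ_{p^j}$ conditions of the Sylow subgroups; your program addresses only the $FSZ_{p^j}$ layer and would need a further reduction step (or a separate argument) to cover composite $m$.
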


Some remarks on why this conjecture may involve some deep results to establish affirmatively seems in order.

Consider a group $G$ and let $u,g\in G$ and $n\in\BN$ with $(n,|G|)=1$.  Suppose that $g$ has order a power of $p$, for some prime $p$.  Then
\[ G_{p^j}(u,g) = \bigcup G_{p^j}(u,g)\cap P^x,\]
where the union runs over all distinct conjugates $P^x$ in $C_G(g)$ of a fixed Sylow $p$-subgroup $P$ of $C_G(g)$.  Let $P^x_{p^j}(u,g) = G_{p^j}(u,g)\cap P^x$.  Then $|G_{p^j}(u,g)|=|G_{p^j}(u,g^n)|$ if and only if there is a bijection $\bigcup P^x(u,g)\to \bigcup P^x(u,g^n)$.  In the special case $u\in P$, if $P$ was $FSZ_{p^j}$ we would have a bijection $P(u,g)\to P(u,g^n)$, but this does not obviously guarantee a bijection $P^x(u,g)\to P^x(u,g^n)$ for all conjugates.  Attempting to get a bijection $\bigcup P^x(u,g)\to \bigcup P^x(u,g^n)$ amounts, via the Inclusion-Exclusion Principle, to controlling the intersections of any number of conjugates and how many elements those intersections contribute to $G_{p^j}(u,g)$ and $G_{p^j}(u,g^n)$.  There is no easy or known way to predict the intersections of a collection of Sylow $p$-subgroups for a completely arbitrary $G$, so any positive affirmation of the conjecture will impose a certain constraint on these intersections.

Moreover, we have not considered the case of the sets $G_m(u,g)$ where $m$ has more than one prime divisor, nor those cases where $u,g$ do not order a power of a fixed prime, so a positive affirmation of the conjecture is also expected to show that the $FSZ_m$ properties are all derived from the $FSZ_{p^j}$ properties for all prime powers dividing $m$.  On the other hand, a counterexample seems likely to involve constructing a large group which exhibits a complex pattern of intersections in its Sylow $p$-subgroups for some prime $p$, or otherwise exhibits the first example of a group which is $FSZ_{p^j}$ for all prime powers but is nevertheless not $FSZ$.

\begin{example}
  All currently known non-$FSZ$ groups are either $p$-groups (for which the conjecture is trivial), are nilpotent (so are just direct products of their Sylow subgroups), or come from perfect groups (though the relevant centralizers need not be perfect).  The examples of both $FSZ$ and non-$FSZ$ groups we establish here will also all come from perfect groups and $p$-groups.  In the process we obtain, via the centralizers and maximal subgroups considered, an example of a solvable, non-nilpotent, non-$FSZ$ group; as well as an example of a non-$FSZ$ group which is neither perfect nor solvable.  All of these examples, of course, conform to the conjecture.
\end{example}

%%%%%%%%%%%%%%%%%%%%%%%%%%%%%%%%%%%%%%%%%%%%%%%%%%%%%%%%%%%%%%%%%%%%%%%%%%%%%%%%%%%%%%%%%%%%%%
\section{GAP functions and groups of small order}\label{sec:functions}
The current gold standard for general purpose testing of the $FSZ$ properties in \citet{GAP4.8.4} is the \verb"FSZtest" function of \citet{PS16}.  In certain specific situations, the function \verb"FSInd" from \citep{IMM} can also be useful for showing a group is non-$FSZ$.  However, with most of the groups we will consider in this paper both of these functions are impractical to apply directly.  The principle obstruction for \verb"FSZtest" is that this function needs to compute both conjugacy classes and character tables of centralizers, and this can be a memory intensive if not wholly inaccessible task.  For \verb"FSInd" the primary obstruction, beyond its specialized usage case, is that it must completely enumerate, store, and sort the entire group (or centralizer).  This, too, can quickly run into issues with memory consumption.

We therefore need alternatives for testing (the failure of) the FSZ properties which can sidestep such memory consumption issues.  For \cref{sec:perfect} we will also desire functions which can help us detect and eliminate the more "obviously" FSZ groups.  We will further need to make various alterations to \verb"FSZtest" to incorporate these things, and to return a more useful value when the group is not $FSZ$.

The first function we need, \verb"FSZtestZ", is identical to \verb"FSZtest"---and uses several of the helper functions found in \citep{PS16}---except that instead of calculating and iterating over all rational classes of the group it iterates only over those of the center.  It needs only a single input, which is the group to be checked.  If it finds that the group is non-$FSZ$, rather than return \verb"false" it returns the data that established the non-$FSZ$ property.  Of particular importance are the values \verb"m" and \verb"z".  If the group is not shown to be non-$FSZ$ by this test, then it returns \verb"fail" to indicate that the test is typically inconclusive.

\begin{lstlisting}
FSZtestZ := function(G)
local CT, zz, z , cl, div, d, chi, m, b ;

cl := RationalClasses(Center(G));
cl := Filtered(cl,c->not Order(Representative(c))
                in [1,2,3,4,6]);

for zz in cl do
    z := Representative(zz);

	div := Filtered(DivisorsInt(Exponent(G)/Order(z)),
            m->not Gcd(m,Order(z)) in [1,2,3,4,6]);
	if Length(div) < 1 then continue; fi;

	CT := OrdinaryCharacterTable(G);

	for chi in Irr(CT) do
		for m in div do
			if not IsRat(beta(CT, z, m, chi))
                            then return [z,m,chi,CT];
			fi ;
		od;
	od;
od;

#the test is inconclusive in general
return fail ;

end ;
\end{lstlisting}
This function is primarily useful for testing groups with minimal order in a class closed under centralizers, such as in \cref{lem:minorder,cor:minorder}. Or for any group with non-trivial center that is suspected of failing the $FSZ$ property at a central value.

We next desire a function which can quickly eliminate certain types of groups as automatically being $FSZ$.  For this, the following result on groups of small order is helpful.
\begin{thm}\label{thm:small-order}
  Let $G$ be a group with $|G|<2016$ and $|G|\neq 1024$.  Then $G$ is $FSZ$.
\end{thm}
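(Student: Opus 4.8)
The plan is to reduce the theorem to a finite computation and then argue that this computation is feasible with the tools already available. The key structural observation is that by the remarks following \cref{df:FSZ}, checking whether $G$ is $FSZ_m$ only requires looking at the sets $H_m(u,g)$ for $H = C_G(g)$, and moreover by \cref{cor:u-g-exchange} it suffices to let $g$ range over representatives of the rational classes of $G$ (equivalently conjugacy classes, after accounting for the $n$-twist), $u$ range over the centralizer $C_G(g)$ up to $C_G(g)$-conjugacy, and $m$ range over divisors of the exponent of $G$. This is a finite check for each fixed $G$, and it is exactly what \verb"FSZtest" from \citep{PS16} performs. So the content of the theorem is: for every group $G$ of order less than $2016$ other than those of order $1024$, this check returns \verb"true".

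First I would invoke the classical reductions that cut down the number of groups and the work per group. A group of odd order, or more generally any group whose order is not divisible by a prime $p \ge 5$, has all relevant prime-power exponents falling into the range where the $FSZ_m$ condition is automatic or already known: in fact the only primes that can produce non-$FSZ$ behavior are $p \ge 5$ (since the minimum order of a non-$FSZ$ $p$-group is $p^{p+1}$, which is $\ge 5^6$ for $p \ge 5$, and nothing is known below $2^{10}$ and $3^8$ for $p = 2,3$), so only the prime-power divisors $m$ with a prime factor $\ge 5$ need to be tested, and the sets $G_m(u,g)$ with $g$ or $u$ of order coprime to such primes contribute trivially. One then observes that if every Sylow subgroup of $G$ is $FSZ$ — for instance if $G$ has a normal Sylow subgroup of each relevant order, or is nilpotent, or is a direct product — the conclusion follows from the direct-product behavior of $FSZ$ \citep[Example 4.5]{IMM} together with the list of known $FSZ$ families (regular $p$-groups, and $p$-groups of order less than $p^{p+1}$). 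The upshot is that the vast majority of the groups of order below $2016$ are eliminated a priori, leaving a manageable residual list.

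Next I would run \verb"FSZtest" (or the trimmed variants \verb"FSZtestZ" and the centralizer-based routines described above) in \citet{GAP4.8.4} over the remaining groups, iterating over the small-group library via \verb"SmallGroup(n,i)" for each $n < 2016$ with $n \ne 1024$ and each $i$ up to \verb"NumberSmallGroups(n)". For each such group the function computes the conjugacy classes and character tables of the centralizers and evaluates the relevant indicator sums $\beta$; rationality of all these values certifies $FSZ$. The exclusion of order $1024$ is forced because $2^{10}$ is precisely the first order at which the $FSZ$ status of $2$-groups is genuinely open, and the small-group library at order $1024$ is enormous and was not exhausted.

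The main obstacle is not mathematical subtlety but rather computational scale and the need to justify that it has been carried out correctly and completely: one must confirm that the small-group library is complete for all $n < 2016$ with $n \ne 1024$ (it is), that \verb"FSZtest" is a faithful implementation of the criterion in \cref{df:FSZ} via \citep[Corollary 3.2]{IMM} and \citep{PS16}, and that the preliminary reductions used to shrink the search space are logically sound so that the groups skipped really are $FSZ$ for a cited reason. In practice the hard part is organizing the enumeration so the whole computation completes in reasonable time and memory — which is exactly why the pared-down functions above were introduced — and documenting that no group in range was overlooked.
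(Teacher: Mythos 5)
Your overall strategy---turn the statement into an exhaustive, finite GAP verification over the \verb"SmallGroups" library and certify each group with a complete test from \citep{PS16}---is the same in spirit as the paper's, but you miss the one idea the paper's proof actually rests on: by \cref{lem:minorder}, if the groups are processed so that all smaller admissible orders are already verified, a putative non-$FSZ$ group in range would be of minimal order, so its failure must occur at a \emph{central} element $g$ (note a proper centralizer of a group of order $<2016$, $\neq 1024$, cannot itself have order $1024$). Hence it suffices to run the far cheaper \verb"FSZtestZ", which iterates only over rational classes of the center, rather than the full \verb"FSZtest" with its centralizer character tables. Running \verb"FSZtest" on everything is logically sound but much heavier; that alone is not a gap.

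The genuine gap is in your pre-screening. The claim that ``only the prime-power divisors $m$ with a prime factor $\geq 5$ need to be tested,'' justified by the bounds $2^{10}$ and $3^8$, is not a theorem: those bounds concern $p$-\emph{groups} only, and say nothing about a mixed-order group, e.g.\ one of order $1536=2^9\cdot 3$ with exponent divisible by $16$, failing $FSZ_8$ or $FSZ_{16}$. There is no cited result that a $\{2,3\}$-group of order $<2016$ is automatically $FSZ$---that is part of what \cref{thm:small-order} itself establishes, so invoking it in the reduction is circular, and the order-$1536$ groups are precisely the ones the paper reports as taking the longest to check. The eliminations actually licensed by \citep{IMM} are the exponent conditions encoded in \verb"IMMtests" (exponent with squarefree part at primes $>3$ and small $2$- and $3$-parts), which are strictly weaker than your claim; likewise ``if every Sylow subgroup is $FSZ$ then $G$ is $FSZ$'' is \cref{conj}, not a known result, and only the nilpotent/direct-product special case you mention may be used. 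If your residual list is built with these unsound reductions, groups in range are skipped without justification; if instead you only discard groups covered by the IMM criteria and test the rest, the proof goes through---but then you should either accept the much larger computation or adopt the \cref{lem:minorder}/\verb"FSZtestZ" shortcut, which is the substance of the paper's argument.
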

\begin{proof}
  By \cref{lem:minorder} it suffices to run \verb"FSZtestZ" over all groups in the \verb"SmallGroups" library of GAP.  This library includes all groups with $|G|<2016$, except those of order $2^{10}=1024$.  In practice, the author also used the function \verb"IMMtests" introduced below, but where the check on the size of the group is constrained initially to 100 by \citep[Corollary 5.5]{IMM}, and can be increased whenever desired to eliminate all groups of orders already completely tested.  This boils down to quickly eliminating $p$-groups and groups with relatively small exponent.  By using the closure of the $FSZ$ properties with respect to direct products, one need only consider a certain subset of the orders in question rather than every single one in turn, so as to avoid essentially double-checking groups.  We note that the groups of order 1536 take the longest to check.  The entire process takes several days over multiple processors, but is otherwise straightforward.
\end{proof}

We now define the function \verb"IMMtests". This function implements most of the more easily checked conditions found in \citep{IMM} that guarantee the $FSZ$ property, and calls \verb"FSZtestZ" when it encounters a suitable $p$-group.  The function returns \verb"true" if the test conclusively establishes that the group is $FSZ$; the return value of \verb"FSZtestZ" if it conclusively determines the group is non-$FSZ$; and \verb"fail" otherwise.  Note that whenever this function calls \verb"FSZtestZ" that test is conclusive by \cref{cor:minorder}, so it must adjust a return value of \verb"fail" to \verb"true".
\begin{lstlisting}
IMMtests := function(G)
	local sz, b, l, p2, p3, po;

	if IsAbelian(G)
		then return true;
	fi;
	
	sz := Size(G);

	if (sz < 2016) and (not sz=1024)
		then return true;
	fi;
		
	if IsPGroup(G) then
        #Regular p-groups are always FSZ.

		l := Collected(FactorsInt(sz))[1];

		if l[1]>= l[2] or Exponent(G) = l[1]
			then return true;
		fi;

		sz := Length(UpperCentralSeries(G));

		if l[1]=2 then
			if l[2]<10 or sz < 3
                         or Exponent(G)<64
				then return true;
			elif l[2]=10 and sz >= 3
			then
                            b := FSZtestZ(G);
                            if IsList(b) then return b;
                            else return true;
                            fi;
			fi;
		elif l[1]=3 then
			if l[2]<8 or sz < 4
                         or Exponent(G)<27
				then return true;
			elif l[2]=8 and sz>=4
			then
                            b := FSZtestZ(G);
                            if IsList(b) then return b;
                            else return true;
                            fi;
			fi;
		elif sz < l[1]+1
			then return true;
		elif sz = l[1]+1 and sz=l[2]
		then
                    b := FSZtestZ(G);
                    if IsList(b) then return b;
                    else return true;
                    fi;
		fi;
	else
        #check the exponent for non-p-groups
		l := FactorsInt(Exponent(G));
		p2 := Length(Positions(l,2));
		p3 := Length(Positions(l,3));
		po := Filtered(l,x->x>3);

		if ForAll(Collected(po),x->x[2]<2) and
			( (p2 < 4 and p3 < 4)
                    or ( p2 < 6 and p3 < 2) )  	
			then return true;
		fi;		
	fi;
	
	#tests were inconclusive
	return fail ;
end;
\end{lstlisting}

We then incorporate these changes into a modified version of \verb"FSZtest", which we give the same name.  Note that this function also uses the function \verb"beta" and its corresponding helper functions from \citep{PS16}.  It has the same inputs and outputs as \verb"FSZtestZ", except that the test is definitive, and so returns \verb"true" when the group is $FSZ$.

\begin{lstlisting}
FSZtest := function (G)
local C, CT, zz, z, cl, div, d, chi, m, b;

b := IMMtests(G);;

if not b=fail
	then return b;
fi;

cl := RationalClasses(G);
cl := Filtered(cl,c->not Order(Representative(c))
            in [1,2,3,4,6]);

for zz in cl do
        z := Representative(zz);
	C := Centralizer(G, z);

	div := Filtered(DivisorsInt(Exponent(C)/Order(z)),
                m->not Gcd(m,Order(z)) in [1,2,3,4,6]);
	
	if Length(div) < 1 then continue; fi;

        # Check for the easy cases	
	b := IMMtests(C);

	if b=true
		then continue;
	elif IsList(b) then
		if RationalClass(C,z)=RationalClass(C,b[2])
			then return b;
		fi;
	fi;

	CT := OrdinaryCharacterTable(C) ;

	for chi in Irr(CT) do
		for m in div do
			if not IsRat(beta(CT, z, m, chi))
                            then return [m,z,chi,CT];
			fi ;
		od;
	od;
od;

return true ;

end ;
\end{lstlisting}

Our typical procedure will be as follows: given a group $G$, take its Sylow 5-subgroup $P$ and find $u,g\in P$ such that $|P_5(u,g)|\neq |P_5(u,g^n)|$ for $5\nmid n$, and then show that $|G_5(u,g)|\neq |G_5(u,g^n)|$.  The second entry in the list returned by \verb"FSZtest" gives precisely the $g$ value we need.  But it does not provide the $u$ value directly, nor the $n$.  As it turns out, we can always take $n=2$ when $o(g)=5$, but for other orders this need not necessarily hold.

In order to acquire these values we introduce the function \verb"FSIndPt" below, which is a variation on \verb"FSInd" \citep{IMM}.  This function has the same essential limitation that \verb"FSInd" does, in that it needs to completely enumerate, store, and sort the elements of the group.  This could in principle be avoided, at the cost of increased run-time.  However our main use for the function is to apply it to Sylow 5-subgroups which have small enough order that this issue does not pop up.

The inputs are a group $G$, $m\in\BN$ and $g\in G$.  It is best if one in fact passes in $C_G(g)$ for $G$, but the function will compute the centralizer regardless.  The function looks for an element $u\in C_G(g)$ and an integer $j$ coprime to the order of $g$ such that $|G_m(u,g)| \neq |G_m(u,g^j)|$.  The output is the two element list \verb"[u,j]" if such data exists, otherwise it returns \verb"fail" to indicate that the test is normally inconclusive.  Note that by \cref{cor:u-g-exchange} and centrality of $g$ in $C=C_G(g)$ we need only consider the rational classes in $C$ to find such a $u$.
\begin{lstlisting}
FSIndPt:=function(G,m,g)

local GG, C, Cl, gucoeff, elG, Gm, alist,
        aulist, umlist, npos, j, n, u, pr;

C := Centralizer(G,g);

GG := EnumeratorSorted(C);;
elG := Size(C);

Gm := List(GG,x->Position(GG,x^m));

pr := PrimeResidues(Order(g));

for Cl in RationalClasses(C) do
	
    u := Representative(Cl);

    npos := [];
    alist := [];
    aulist := [];
    umlist := [];
    gucoeff := [];
	
    umlist := List(GG, a->Position(GG,
                (a*Inverse(u))^m));;

    #The following computes the cardinalities
    # of G_m(u,g^n).
    for n in pr
    do
    	npos := Position(GG, g^n);
    	alist := Positions(Gm, npos);
    	aulist := Positions(umlist, npos);
    	gucoeff[n] := Size(
                    Intersection(alist, aulist));

        #Check if we've found our u
        if not gucoeff[n] = gucoeff[1]
                then return [u,n];
        fi;
    od;
od;

#No u was found for this G,m,g
return fail;

end;
\end{lstlisting}

Lastly, we introduce the function \verb"FSZSetCards", which is the most naive and straightforward way of computing both $|G_m(u,g)|$ and $|G_m(u,g^n)|$.  The inputs are a set $C$ of group elements---normally this would be $C_G(g)$, but could be a conjugacy class or some other subset or subgroup---; group elements $u,g$; and integers $m,n$ such that $g\neq g^n$.  The output is a two element list, which counts the number of elements of $C$ in $G_m(u,g)$ in the first entry and the number of elements of $C$ in $G_m(u,g^n)$ in the second entry.  It is left to the user to check that the inputs satisfy whatever relations are needed, and to then properly interpret the output.
\begin{lstlisting}
FSZSetCards := function(C,u,g,m,n)
    local contribs, apow , aupow, a;

    contribs := [0,0];

    for a in C do
        apow := a^m;
        aupow := (a*Inverse(u))^m;

        if (apow = g and aupow = g) then
                contribs[1] := contribs[1]+1;

        elif (apow=g^n and aupow=g^n) then
                contribs[2] := contribs[2]+1;
        fi;
    od;

    return(contribs);
end;
\end{lstlisting}
As long as $C$ admits a reasonable iterator in GAP then this function can compute these cardinalities with a very minimal consumption of memory.  Any polycyclic or permutation group satisfies this, as well as any conjugacy class therein.  However, for a matrix group GAP will attempt to convert to a permutation representation, which is usually very costly.

The trade-off, as it often is, is in the speed of execution.  For permutation groups the run-time can be heavily impacted by the degree, such that it is almost always worthwhile to apply \verb"SmallerDegreePermutationRepresentation" whenever possible.  If the reader wishes to use this function on some group that hasn't been tested before, the author would advise adding in code that would give you some ability to gauge how far along the function is.  By default there is nothing in the above code, even if you interrupt the execution to check the local variables, to tell you if the calculation is close to completion.   Due to a variety of technical matters it is difficult to precisely benchmark the function, but when checking a large group it is advisable to acquire at least some sense of whether the calculation may require substantial amounts of time.
\begin{rem}
Should the reader opt to run our code to see the results for themselves, they may occasionally find that the outputs of \verb"FSZSetCards" occur in the opposite order we list here.  This is due to certain isomorphisms and presentations for groups calculated in GAP not always being guaranteed to be identical every single time you run the code.  As a result, the values for $u$ or $g$ may sometimes be a coprime power (often the inverse) of what they are in other executions of the code.  Nevertheless, there are no issues with the function proving the non-$FSZ$ property thanks to \cref{cor:u-g-exchange}, and there is sufficient predictability to make the order of the output the only variation.
\end{rem}
While very naive, \verb"FSZSetCards" will suffice for most of our purposes, with all uses of it completing in an hour or less.  However, in \cref{sec:symplectic} we will find an example where the expected run-time for this function is measured in weeks, and for which \verb"FSZtest" requires immense amounts of memory---\citet{PS16} says that \verb"FSZtest" for this group consumed 128 GB of memory without completing!

We therefore need a slightly less naive approach to achieve a more palatable run-time in this case.  We leave this to \cref{sec:symplectic}, but note to the reader that the method this section uses can also be applied to all of the other groups for which \verb"FSZSetCards" suffices.  The reason we bother to introduce and use \verb"FSZSetCards" is that the method of \cref{sec:symplectic} relies on being able to compute conjugacy classes, which can hit memory consumption issues that \verb"FSZSetCards" will not encounter.  It is not our goal with these functions to find the most efficient, general-purpose procedure. Instead we seek to highlight some of the ways in which computationally problematic groups may be rendered tractable by altering the approach one takes, and to show that the non-$FSZ$ property of these groups can be demonstrated in a (perhaps surprisingly) short amount of time and with very little memory consumption.

%%%%%%%%%%%%%%%%%%%%%%%%%%%%%%%%%%%%%%%%%%%%%%%%%%%%%%%%%%%%%%%%%%%%%%%%%%%%%%%%%%%%%%%%%%%%%%%%%%
\section{The \texorpdfstring{non-$FSZ$}{non-FSZ} sporadic simple groups}\label{sec:simple}
The goal for this section is to show that the Chevalley group $G_2(5)$, and all sporadic simple groups with order divisible by $5^6$, as well as their Sylow 5-subgroups, are non-$FSZ_5$.  We begin with a discussion of the general idea for the approach.

Our first point of observation is that the only primes $p$ such that $p^{p+1}$ divides the order of any of these groups have $p\leq 5$.  Indeed, a careful analysis of the non-$FSZ$ groups of order $5^6$ found in \citep{IMM} shows that several of them are non-split extensions with a normal extra-special group of order $5^5$, which can be denoted in AtlasRep notation as $5^{1+4}.5$.  Consulting the known maximal subgroups for these groups we can easily infer that the Sylow 5-subgroups of $HN$, $G_2(5)$, $B$, and $Ly$ have this same form, and that the Monster has such a $p$-subgroup.  Indeed, $G_2(5)$ is a maximal subgroup of $HN$, and $B$ and $Ly$ have maximal subgroups containing a copy of $HN$, so these Sylow subgroups are all isomorphic.  Furthermore, the Monster's Sylow 5-subgroup has the form $5^{1+6}.5^2$, a non-split extension of the elementary abelian group of order 25 by an extra special group of order $5^7$ .  Given this, we suspect that these Sylow 5-subgroups are all non-$FSZ_5$, and that this will cause the groups themselves to be non-$FSZ_5$.

We can then exploit the fact that non-trivial $p$-groups all have non-trivial centers to obtain centralizers in the parent group that contain a Sylow 5-subgroup.  In the case of $G=HN$ or $G=G_2(5)$, we can quickly find $u,g\in P$, with $P$ a Sylow 5-subgroup of $G$, such that $|P_5(u,g)| \neq |P_5(u,g^2)|$, and show that for $H=C_{G}(g)$ we have $|H_5(u,g)|\neq |H_5(u,g^2)|$.  Since necessarily $|H_5(u,g)|=|G_5(u,g)|$ and $|H_5(u,g^2)|=|G_5(u,g^2)|$, this will show that $HN$ and $G_2(5)$ are non-$FSZ_5$.  Unfortunately, it turns out that $P$ is not normal in $H$ in either case, so the cardinalities of these sets in $H$ must be checked directly, rather than simply applying \cref{cor:normalSylow}.  The remaining groups require a little more work, for various reasons.

In the case of the Monster, there is a unique non-identity conjugacy class yielding a centralizer with order divisible by $5^9$. So we are free to pick any subgroup $G$ of $M$ that contains a centralizer with this same order.  Fortunately, not only is such a (maximal) subgroup known, but \citet{BrayWilson:M} have also computed a permutation representation for it. This is available in GAP via the \verb"AtlasRep" package.  This makes all necessary calculations for the Monster accessible. The Sylow 5-subgroup is fairly easily shown to be non-$FSZ_5$ directly.  However, the centralizer we get in this way has large order, and its Sylow 5-subgroup is not normal, making it impractical to work with on a personal computer.  However, further consultation of character tables shows that the Monster group has a unique conjugacy class of an element of order 10 whose centralizer is divisible by $5^6$.  So we may again pick any convenient (maximal) subgroup with such a centralizer, and it turns out the same maximal subgroup works.  We construct the appropriate element of order 10 by using suitable elements from Sylow subgroups of the larger centralizer, and similarly to get the element $u$.  Again it turns out that the Sylow 5-subgroup of this smaller subgroup is not normal, so we must compute the set cardinalities over the entire centralizer in question.  However, this centralizer is about 1/8000-th the size of the initial one, and we are subsequently able to calculate the appropriate cardinalities in under an hour.

The Baby Monster can then be handled by using the fact that the Monster contains the double cover of $B$ as the centralizer of an involution to obtain the centralizer we need in $B$ from a centralizer in $M$.  The author thanks Robert Wilson for reminding them of this fact.  For the Lyons group, the idea is much the same as for $HN$ and $G_2(5)$, with the additional complication that the \verb"AtlasRep" package does not currently contain any permutation representations for $Ly$.  To resolve this, we obtain a permutation representation for $Ly$, either computed directly in GAP or downloaded \citep{LyPerms}.  This is then used to construct a suitable permutation representation of the maximal subgroup in question.  Once this is done the calculations proceed without difficulties.

These calculations all make extensive use of the functions given in \cref{sec:functions}.

\subsection{Chevalley group \texorpdfstring{$G_2(5)$}{G2(5)}}\label{sub:chev}
We now show that $G_2(5)$ and its Sylow 5-subgroups are not $FSZ_5$.  This was independently verified in \citep{PS16}.  Since $G_2(5)$ is of relatively small order, it can be attacked quickly and easily.

\begin{thm}\label{thm:chev}
    The simple Chevalley group $G_2(5)$ and its Sylow 5-subgroup are non-$FSZ_5$.
\end{thm}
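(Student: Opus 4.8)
The argument is entirely computational, and I would organize it as a ``Sylow-first'' calculation of the kind outlined above. Since $|G_2(5)| = 2^6\cdot 3^3\cdot 5^6\cdot 7\cdot 31$, a Sylow $5$-subgroup $P$ has order exactly $5^6 = \numprint{15625}$, so it is small enough to manipulate directly in GAP. First I would obtain a permutation representation of $G = G_2(5)$ via the \verb"AtlasRep" package, set \verb"P := SylowSubgroup(G,5)" (so that the elements of $P$ are honest elements of $G$), and run \verb"FSZtestZ(P)". Whenever this function returns a list rather than \verb"fail" it has exhibited an irrational higher Frobenius--Schur indicator, which already proves $P$ is non-$FSZ$; from the returned data one reads off a central element $g \in Z(P)$ together with the modulus $m = 5$, this being the only admissible divisor of the exponent of $P$ divided by the order of $g$, as $P$ is a $5$-group. (One can also note that $5^6$ is the minimal order of a non-$FSZ_5$ $5$-group, so by \cref{cor:minorder} any failure in $P$ is forced to occur at a central element, which is exactly what \verb"FSZtestZ" searches.) This settles the Sylow-subgroup half of the statement. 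To get a witness in usable form for $G_2(5)$, I would then call \verb"FSIndPt(P,5,g)" --- here $C_P(g) = P$ because $g \in Z(P)$ --- which returns a pair \verb"[u,n]"; as anticipated when $o(g) = 5$ one finds $n = 2$, so $|P_5(u,g)| \neq |P_5(u,g^2)|$.

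For $G_2(5)$ itself, the key structural remark is that $g \in Z(P)$ forces $P \le H := C_{G_2(5)}(g)$, so $P$ is a Sylow $5$-subgroup of $H$ and $u \in P \subseteq H$. By the observation following \cref{df:gm-sets}, $G_5(u,g) = H_5(u,g)$ and $G_5(u,g^2) = H_5(u,g^2)$ as sets, so it suffices to evaluate the two cardinalities inside $H$. I would compute $H$ in GAP via \verb"Centralizer(G,g)", pass to a smaller-degree faithful permutation representation if that helps the runtime, and call \verb"FSZSetCards(H,u,g,5,2)". This returns a pair of distinct positive integers; since they equal $|G_5(u,g)|$ and $|G_5(u,g^2)|$ respectively, \cref{df:FSZ} yields that $G_2(5)$ is non-$FSZ_5$.

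The main obstacle is that there is no theoretical shortcut bridging the two halves: the failure inside $P$ does not automatically propagate to $G_2(5)$, because $P$ turns out \emph{not} to be normal in $H$, so \cref{cor:normalSylow} is unavailable and the sets $H_5(u,g)$, $H_5(u,g^2)$ genuinely have to be enumerated over all of $H$. That this is practical at all relies on $G_2(5)$ --- and hence $H$ --- being of modest order, so that the naive loop in \verb"FSZSetCards" terminates quickly (well under an hour). A minor nuisance is that GAP's Sylow-subgroup extraction and smaller-degree isomorphisms are not canonical, so from run to run the witness $(u,g)$ may be replaced by a coprime power of itself; by \cref{cor:u-g-exchange} this is harmless, and one can always renormalize by matching on element order and centralizer order.
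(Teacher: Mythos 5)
Your proposal is correct and follows essentially the same route as the paper: run \verb"FSZtestZ" on the Sylow $5$-subgroup to get $g$ (and the non-$FSZ_5$ conclusion for $P$), extract $u$ (with $n=2$) via \verb"FSIndPt", then enumerate $C_{G}(g)$ with \verb"FSZSetCards(C,u,g,5,2)" and use $G_5(u,g)=H_5(u,g)$ for $H=C_G(g)$ to conclude that $G_2(5)$ is non-$FSZ_5$ (the paper's output is $[0,625]$). Your remarks that $P$ is not normal in the centralizer, so \cref{cor:normalSylow} cannot be invoked, and that the witness may vary by a coprime power between runs, match the paper's own comments.
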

\begin{proof}
The claims follow from running the following GAP code.
  \begin{lstlisting}
    G := AtlasGroup("G2(5)");;
    P := SylowSubgroup(G,5);;

    # The following shows P is not FSZ_5
    g := FSZtestZ(P)[2];

    # Find u
    u := FSIndPt(P,5,g)[1];;
    C := Centralizer(G,g);;

    #Check the cardinalities
    FSZSetCards(C,u,g,5,2);
  \end{lstlisting}
  The output is [0,625], so it follows that $G$ and $P$ are both non-$FSZ_5$ as desired.
\end{proof}
We note that $P$ is not normal in $C$, and indeed $C$ is a perfect group of order $\numprint{375000} = 2^3\cdot 3\cdot 5^6$.

The call to \verb"FSZSetCards" above runs in approximately 11 seconds, which is approximately the amount of time necessary to run \verb"FSZtest" on $G_2(5)$ directly.  In this case, the use of \verb"FSZSetCards" is not particularly efficient, as the groups in question are of reasonably small sizes and permutation degree.  Nevertheless, this demonstrates the basic method we will employ for all subsequent groups.

\subsection{The Harada-Norton group}\label{sub:HN}
For the group $HN$ the idea proceeds similarly as for $G_2(5)$.

\begin{thm}\label{thm:HN}
  The Harada-Norton simple group $HN$ and its Sylow 5-subgroup are not $FSZ_5$.
\end{thm}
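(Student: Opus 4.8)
The plan is to follow the template of the proof of \cref{thm:chev} for $G_2(5)$, exploiting that $G_2(5)$ is a maximal subgroup of $HN$ and that both share a Sylow $5$-subgroup $P$ of the form $5^{1+4}.5$. First I would realize $HN$ --- either through the permutation representation available via \verb"AtlasRep", or by first building the maximal subgroup $G_2(5)\le HN$ so as to obtain a Sylow $5$-subgroup cheaply --- and extract $P=\verb"SylowSubgroup(G,5)"$. Applying \verb"FSZtestZ(P)" produces a central element $g\in Z(P)$ of order $5$ witnessing that $P$ is not $FSZ_5$; this already settles the Sylow-subgroup half of the statement. Then \verb"FSIndPt(P,5,g)" returns the companion element $u\in C_P(g)$, and, as remarked in \cref{sec:functions}, one may take $n=2$ since $o(g)=5$.

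Next I would compute $H=C_{HN}(g)$. Because $g$ generates $Z(P)$ we have $P\le H$, so $5^6\mid |H|$; matching the $5$-local structure against the ATLAS confirms that $|H|$ is only a small multiple of $5^6$, hence of tractable size. A call to \verb"FSZSetCards(H,u,g,5,2)" then returns the two cardinalities $|H_5(u,g)|$ and $|H_5(u,g^2)|$; since $u,g\in H=C_{HN}(g)$ these equal $|HN_5(u,g)|$ and $|HN_5(u,g^2)|$ respectively, by the reduction to centralizers noted after \cref{df:gm-sets}, and they will be unequal --- indeed one expects the same output $[0,625]$ as in the $G_2(5)$ case, since the only contributions to these sets come from the common Sylow $5$-subgroup. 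This exhibits an $n=2$ coprime to $o(g)=5$ with $|HN_5(u,g)|\neq |HN_5(u,g^2)|$, so $HN$ is not $FSZ_5$.

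The main obstacle is computational rather than conceptual: $HN$ is large and its smallest faithful permutation representation has degree exceeding $10^6$, so naively computing \verb"SylowSubgroup" and \verb"Centralizer" in $HN$ and then iterating \verb"FSZSetCards" over $H$ could be prohibitively slow or memory-hungry. The remedy is to keep every expensive computation inside a group of modest order: construct $P$ inside (a copy of) the maximal subgroup $G_2(5)$, and carry out the centralizer and set-cardinality computations inside a small $5$-local subgroup of $HN$ that provably contains all of $C_{HN}(g)$ --- for instance the normalizer of $Z(P)$ --- after applying \verb"SmallerDegreePermutationRepresentation" to reduce the permutation degree before running \verb"FSZSetCards". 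One must also check that $P$ is not normal in $H$ --- it is not --- so that \cref{cor:normalSylow} does not apply and the direct count via \verb"FSZSetCards" is genuinely what is needed.
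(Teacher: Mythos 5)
Your proposal follows the paper's method closely: extract the Sylow $5$-subgroup $P$, find $g$ and $u$ with \verb"FSZtestZ" and \verb"FSIndPt", reduce to the centralizer $C_{HN}(g)$, and count with \verb"FSZSetCards". That part is fine and is what the paper does, though the paper handles the degree problem by passing $P$ and then $C$ through \verb"IsomorphismPcGroup" (the centralizer turns out to be solvable of order $2^5\cdot 5^6=\numprint{500000}$) rather than working inside $G_2(5)$ or a $5$-local overgroup.

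However, the sentence ``one expects the same output $[0,625]$ as in the $G_2(5)$ case, since the only contributions to these sets come from the common Sylow $5$-subgroup'' is wrong, and for a reason you yourself touch on two sentences later. Since $P$ is \emph{not} normal in $H=C_{HN}(g)$, the sets $H_5(u,g)$ and $H_5(u,g^2)$ receive contributions from every conjugate of $P$ in $H$, and the pattern of Sylow intersections in $H$ has no reason to match that in $C_{G_2(5)}(g)$, which is a different group of order $2^3\cdot 3\cdot 5^6=\numprint{375000}$. Indeed the paper reports $[3125,0]$ for $HN$, not $[0,625]$, and it is exactly this phenomenon --- elements coming from nontrivial conjugates of $P$ --- that the paper highlights after the Baby Monster computation as the obstruction to proving \cref{conj}. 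The error is harmless to the correctness of your procedure, since the conclusion only requires the two counts to differ, but the reasoning ``only $P$ contributes'' is precisely the fallacy the paper is at pains to warn against.
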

\begin{proof}
  To establish the claims it suffices to run the following GAP code.
  \begin{lstlisting}
    G := AtlasGroup("HN");;
    P := SylowSubgroup(G,5);;

    # G, thus P, has very large degree.
    # Polycyclic groups are easier to work with.
    isoP := IsomorphismPcGroup(P);;
    P := Image(isoP);;

    #Find u, g for P
    g := FSZtestZ(P)[2];
    u := FSIndPt(P,5,g)[1];

    g := Image(InverseGeneralMapping(isoP),g);;
    u := Image(InverseGeneralMapping(isoP),u);;

    C := Centralizer(G,g);;
    isoC := IsomorphismPcGroup(C);;
    C := Image(isoC);;

    FSZSetCards(C,Image(isoC,u),Image(isoC,g),5,2);
  \end{lstlisting}
  This code executes in approximately 42 minutes, with approximately 40 of that spent finding $P$.  The final output is [3125,0], so we conclude that both $P$ and $HN$ are non-$FSZ_5$, as desired.
\end{proof}
$P$ is again not a normal subgroup of $C$, so we again must test the entire centralizer rather than just $P$.  We note that $|C|=2^5 5^6=\numprint{500000}$.  Indeed, $C$ is itself non-$FSZ_5$ of necessity, and the fact that the call to \verb"IsomorphismPcGroup" did not fail means that $C$ is solvable, and in particular not perfect and not a $p$-group.

\subsection{The Monster group}\label{sub:monster}
We will now consider the Monster group $M$.  The full Monster group is famously difficult to compute in.  But, as detailed in the beginning of the section, by consulting character tables of $M$ and its known maximal subgroups, we can find a maximal subgroup which contains a suitable centralizer (indeed, two suitable centralizers) and also admits a known permutation representation \citep{BrayWilson:M}.

\begin{thm}\label{thm:Monster}
  The Monster group $M$ and its Sylow 5-subgroup are not $FSZ_5$.
\end{thm}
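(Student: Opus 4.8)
The plan is to follow the same template as for \cref{thm:chev} and \cref{thm:HN}, but with an extra layer of indirection forced by the sheer size of $M$. First I would identify the Sylow $5$-subgroup $P$ of $M$; as noted above it has the form $5^{1+6}.5^2$ of order $5^9$. Working with a polycyclic presentation of $P$ (obtained via \verb"IsomorphismPcGroup" after cutting down the permutation degree, exactly as in the $HN$ case), I would run \verb"FSZtestZ" to produce a central element $g$ of $P$ with $|P_5(u,g)|\neq|P_5(u,g^2)|$, and then \verb"FSIndPt" to extract a witnessing $u\in C_P(g)$. This already shows $P$ is non-$FSZ_5$.

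The difficulty is then transferring the failure from $P$ up to $M$. For $g\in P$ of order $5$ one has $5^9\mid|C_M(g)|$, and the character table of $M$ shows there is a unique class of elements of order $5$ with this property; but such a centralizer is far too large, and its Sylow $5$-subgroup is not normal, so \verb"FSZSetCards" over it is impractical. Instead I would use the observation, read off from the character table, that $M$ has a unique class of elements of order $10$ whose centralizer has order divisible by $5^6$. Any subgroup of $M$ containing such a centralizer suffices, and the maximal subgroup of $M$ for which \citet{BrayWilson:M} provide a permutation representation (available through \verb"AtlasRep") is one. Inside that subgroup I would build an element $h$ of order $10$ of the required class by combining a $5$-element and a commuting involution taken from Sylow subgroups of the larger (order-$5$) centralizer, and likewise produce a matching $u$ from the data already found in $P$.

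With $h$ and $u$ in hand, and $C=C_M(h)$ computed inside the maximal subgroup, the final step is the direct count: pass to a polycyclic or smaller-degree representation, evaluate \verb"FSZSetCards(C,u,h,5,2)", and observe that the two entries differ, one of them being $0$. Since $|M_5(u,h)|=|C_5(u,h)|$ and $|M_5(u,h^2)|=|C_5(u,h^2)|$ regardless of which intermediate subgroup is used to realise $C$, this establishes $|M_5(u,h)|\neq|M_5(u,h^2)|$, so $M$ is non-$FSZ_5$; combined with the first paragraph this proves the theorem. The Sylow $5$-subgroup of $C$ is again not normal, so the count really does have to range over all of $C$, but $C$ is roughly $1/8000$ the size of the order-$5$ centralizer, which brings the computation down to well under an hour.

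The main obstacle is computational bookkeeping rather than mathematics: one cannot touch $M$ itself, so the proof depends on (i) locating, via character-table data, a class of order-$10$ elements whose centralizer is small enough yet still inherits the non-$FSZ_5$ behaviour of $P$, (ii) realising that centralizer concretely inside a maximal subgroup with an explicit permutation representation, and (iii) transporting the specific witnesses $u$ and $g$ from the Sylow subgroup into that realisation. Once those three steps are arranged, the verification itself is a short, low-memory calculation.
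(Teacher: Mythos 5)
Your overall strategy is exactly the paper's: realise a Sylow $5$-subgroup and the relevant centralizer inside the maximal subgroup $5^{1+6}_{+}{:}2.J_2.4$ of \citet{BrayWilson:M}, certify the Sylow subgroup as non-$FSZ_5$ with \verb"FSZtestZ"/\verb"FSIndPt", switch from the (too large) centralizer of the order-$5$ element to the unique order-$10$ class whose centralizer has order $12{,}000{,}000$, build $h=zp\cdot zq$ and $u$ from commuting $5$- and $2$-parts, and finish with a direct count over $C=C_M(h)$ using $|M_5(u,h)|=|C_5(u,h)|$.

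However, there is a genuine error in the final step: you propose to run \verb"FSZSetCards(C,u,h,5,2)", i.e.\ to compare $|M_5(u,h)|$ with $|M_5(u,h^2)|$ for an element $h$ of order $10$. The $FSZ_5$ condition (\cref{df:FSZ}) only constrains $|G_m(u,g)|$ versus $|G_m(u,g^n)|$ for $n$ \emph{coprime to the order of} $g$; since $\gcd(2,10)=2$, the element $h^2$ has order $5$ and is not a permissible comparison point, so a discrepancy between these two cardinalities proves nothing about the $FSZ_5$ property of $M$. This is precisely the pitfall flagged in \cref{sec:functions}: $n=2$ always works when $o(g)=5$, but not for other orders. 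The paper's computation instead uses $n=7$ (coprime to $10$, and congruent mod $5$ to the exponent witnessing the failure in the Sylow subgroup), obtaining the output $[0,15000]$; \verb"FSIndPt" in fact returns the needed exponent along with $u$, and you must carry it (lifted to something coprime to $10$) into the final call rather than defaulting to $2$. With that correction the argument goes through as you describe; a second, minor point is that $C$ is handled in the paper via \verb"SmallerDegreePermutationRepresentation" rather than a polycyclic presentation.
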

\begin{proof}
  The Sylow 5-subgroup of $M$ has order $5^9$.  Consulting the character table of $M$, we see that $M$ has a unique conjugacy class yielding a proper centralizer with order divisible by $5^9$, and a unique conjugacy class of an element of order 10 whose centralizer has order divisible by $5^6$; moreover, the order of the latter centralizer is precisely 12 million, and in particular is not divisible by $5^7$.  It suffices to consider any maximal subgroups containing such centralizers.  The maximal subgroup of shape $5_{+}^{1+6}:2.J_2.4$, which is the normalizer associated to a $5B$ class, is one such choice.

  We first show that the Sylow 5-subgroup of $M$ is not $FSZ_5$.

  \begin{lstlisting}
    G := AtlasGroup("5^(1+6):2.J2.4");;
    P := SylowSubgroup(G,5);;
    isoP := IsomorphismPcGroup(P);;
    P := Image(isoP);;

    ex := FSZtestZ(P);
  \end{lstlisting}
  The proper centralizer with order divisible by $5^9$ is still impractical to work with.  So we will use the data for $P$ to construct the element of order 10 mentioned above.

  \begin{lstlisting}
    zp := ex[2];;
    zp := Image(InverseGeneralMapping(isoP),zp);;
    C := Centralizer(G,zp);;
    Q := SylowSubgroup(C,2);;

    zq := First(Center(Q),q->Order(q)>1 and
                Size(Centralizer(G,zp*q))=12000000);;

    #This gives us the g and centralizer we want.
    g := zp*zq;;
    C := Centralizer(G,g);;

    #Reducing the permutation degree will
    #save a lot of computation time later.
    isoC := SmallerDegreePermutationRepresentation(C);;
    C := Image(isoC);;
    g := Image(isoC,g);;
    zp := Image(isoC,zp);;
    zq := Image(isoC,zq);;

    #Now proceed to construct a choice of u.
    P := SylowSubgroup(C,5);;
    isoP := IsomorphismPcGroup(P);;
    P := Image(isoP);;

    ex := FSIndPt(P,5,Image(isoP,zp));

    up := Image(InverseGeneralMapping(isoP),ex[1]);;

    #Define our choice of u.
    #In this case, u has order 50.
    u := up*zq;;

    #Finally, we compute the cardinalities
    # of the relevant sets.
    FSZSetCards(C,u,g,5,7);
  \end{lstlisting}
  This final function yields [0,15000], which proves that $M$ is not $FSZ_5$, as desired.
\end{proof}
This final function call takes approximately 53 minutes to complete, while all preceding operations can complete in about 5 minutes combined---though the conversion of $C$ to a lower degree may take more than this, depending.  The lower degree $C$ has degree $18,125$, but requires (slightly) more than 2 GB of memory to acquire.  This conversion can be skipped to keep the memory demands well under 2GB, but the execution time for \verb"FSZSetCards" will inflate to approximately a day and a half.
\begin{rem}
  In the first definition of $C$ above, containing the full Sylow 5-subgroup of $M$, we have $|C|=9.45\times 10^{10} = 2^8\cdot 3^3\cdot 5^9\cdot 7$.  For the second definition of $C$, corresponding to the centralizer of an element of order 10, we have $|C|=1.2\times 10^7= 2^8\cdot 3\cdot 5^6$.  The first centralizer is thus $7875 = 3^2\cdot 5^3\cdot 7$ times larger than the second one.  Either one is many orders of magnitude smaller than $|M|\approx  8.1\times 10^{53}$, but the larger one was still too large to work with for practical purposes.
\end{rem}

\subsection{The Baby Monster}\label{sub:baby}
We can now consider the Baby Monster $B$.

\begin{thm}
  The Baby Monster $B$ and its Sylow 5-subgroup are both non-$FSZ_5$.
\end{thm}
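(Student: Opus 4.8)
The strategy follows the same template used for $HN$ and $G_2(5)$, but with the key twist flagged in the section overview: rather than work inside $B$ directly—where the relevant centralizer is still too large and its Sylow $5$-subgroup is not normal—we import the needed centralizer from the Monster. Concretely, $M$ contains the double cover $2.B$ as the centralizer of a $2A$-involution, so a centralizer $C_M(g)$ for a suitable element $g$ of $5$-power order (or order $10$) sitting inside that involution centralizer maps, modulo the central involution, onto the centralizer $C_B(\bar g)$ we want. The plan is therefore: (1) locate, via the character table of $B$, a non-identity rational class in $B$ whose centralizer has order divisible by $5^6$ and is small enough to compute in; the Sylow $5$-subgroup $P$ of $B$ again has the shape $5^{1+4}.5$ (isomorphic to that of $HN$, as noted in the section preamble), so it is non-$FSZ_5$, and we want $g$ to be a central element of a Sylow $5$-subgroup of such a centralizer. (2) Realize the relevant subgroup of $B$—or better, of $2.B \le M$—as a permutation group using \verb"AtlasGroup" together with the \verb"AtlasRep" data for the involution centralizer in $M$, pass to \verb"IsomorphismPcGroup" on the Sylow $5$-subgroup, and apply \verb"FSZtestZ" to extract the offending $g$ and \verb"FSIndPt" to extract a matching $u$. (3) Form $C = C_B(g)$ (or its preimage/quotient coming from $M$), reduce its permutation degree if needed, and run \verb"FSZSetCards(C,u,g,5,2)"; a nonzero/zero output pair then shows $|C_5(u,g)| \neq |C_5(u,g^2)|$, hence $|B_5(u,g)| \neq |B_5(u,g^2)|$ since $C$ contains a Sylow $5$-subgroup of $B$ and so equals $C_B(g)$ for indicator purposes, so $B$ is non-$FSZ_5$; and the same $P$-level computation shows the Sylow $5$-subgroup of $B$ is non-$FSZ_5$.

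The identification of the Sylow $5$-subgroup of $B$ with that of $HN$ deserves a word: $B$ has a maximal subgroup containing a copy of $HN$ (as remarked in the section introduction), and comparing the exact power of $5$ in $|B|$ (which is $5^6$) with $|HN|$ shows the Sylow $5$-subgroups coincide up to isomorphism. So in fact one may reuse the $g$, $u$ found for $HN$, transported into $B$, rather than recomputing them—though it is cleaner in GAP simply to recompute inside $B$'s own Sylow subgroup. The non-$FSZ_5$ claim for the Sylow subgroup is then immediate from \verb"FSZtestZ" returning a list (which is conclusive for a $p$-group of this minimal-ish order by \cref{cor:minorder}, since $5^6 = 5^{5+1}$).

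The main obstacle I expect is purely computational rather than conceptual: getting a workable permutation (or pc) representation of the ambient group in which the computation lives. Working in $B$ directly via its smallest known permutation representation is on the edge of feasibility, and the centralizer $C_B(g)$—while vastly smaller than $|B| \approx 4.15 \times 10^{33}$—still need not have a normal Sylow $5$-subgroup, forcing the full enumeration in \verb"FSZSetCards". Routing through the $2A$-involution centralizer $2.B$ in $M$, for which Bray–Wilson–type \verb"AtlasRep" data is available, is what makes the memory footprint manageable; the fiddly part is correctly handling the central involution (passing between $2.B$ and $B$, and checking that the element $g$ one picks upstairs has the right order and centralizer order downstairs). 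Once a permutation representation of the appropriate subgroup is in hand, \verb"SmallerDegreePermutationRepresentation" plus \verb"IsomorphismPcGroup" on the Sylow $5$-subgroup bring everything into range, and the remaining steps are routine applications of \verb"FSZtestZ", \verb"FSIndPt", and \verb"FSZSetCards" exactly as in \cref{thm:HN,thm:Monster}.
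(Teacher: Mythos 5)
Your proposal is correct and follows essentially the same route as the paper: the Sylow $5$-subgroup of $B$ is identified with that of $HN$ via the maximal subgroup $HN.2$ (so \cref{thm:HN} settles that claim), and the relevant order-$10$ centralizer of order $\numprint{12000000}$ inside $2.B\le M$ from \cref{thm:Monster} is quotiented by the central involution \verb"zq" to obtain $C_B(g)$, after which \verb"FSZSetCards(C,u,g,5,2)" with the images of \verb"zp" and \verb"up" yields $[15000,3125]$. The paper simply reuses the elements already computed for the Monster rather than rerunning \verb"FSZtestZ"/\verb"FSIndPt", but otherwise the argument is the same.
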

\begin{proof}
  The Baby Monster is well known to have a maximal subgroup of the form $HN.2$, so it follows that $B$ and $HN$ have isomorphic Sylow 5-subgroups.  By \cref{thm:HN} $HN$ has a non-$FSZ_5$ Sylow 5-subgroup, so this immediately gives the claim about the Sylow 5-subgroup of $B$.

  From the character table of $B$ we see that there is a unique non-identity conjugacy class whose centralizer has order divisible by $5^6$.  This corresponds to an element of order 5 from the 5B class, and the centralizer has order $\numprint{6000000}= 2^7\cdot 3\cdot 5^6$.  In the double cover $2.B$ of $B$, this centralizer is covered by the centralizer of an element of order 10. This centralizer necessarily has order $12,000,000$.  Since $M$ contains $2.B$ as a maximal subgroup, and there is a unique centralizer of an element of order 10 in $M$ with order divisible by $\numprint{12000000}$, these centralizers in $2.B$ and $M$ are isomorphic.  We have already computed this centralizer in $M$ in \cref{thm:Monster}.  To obtain the centralizer in $B$, we need only quotient by an appropriate central involution.  In the notation of the proof of \cref{thm:Monster}, this involution is precisely \verb"zq".

  GAP will automatically convert this quotient group $D$ into a lower degree representation, yielding a permutation representation of degree 3125 for the centralizer.  This will require as much as 8GB of memory to complete.  Moreover, the image of \verb"zp" from \cref{thm:Monster} in this quotient group yields the representative of the 5B class we desire, denoted here by $g$.  Using the image of \verb"up" in the quotient for $u$, we can then easily run \verb"FSZSetCards(C,u,g,5,2)" to get a result of [15000,3125], which shows that $B$ is non-$FSZ_5$ as desired.  This final call completes in about 4 minutes.
\end{proof}
Note that in $M$ the final return values summed to \numprint{15000}, with one of the values 0, whereas in $B$ they sum to \numprint{18125} and neither is zero.  This reflects how there is no clear relationship between the $FSZ$ properties of a group and its quotients, even when the quotient is by a (cyclic) central subgroup.  In particular, it does not immediately follow that the quotient centralizer would yield the non-$FSZ$ property simply because the centralizer in $M$ did, or vice versa.

Moreover, we also observe that the cardinalities computed in \cref{thm:HN} implies that for a Sylow 5-subgroup $P$ of $B$ we have $P_5(u,g^2)=\emptyset$, so the 3125 "extra" elements obtained in $B_5(u,g^2)$ come from non-trivial conjugates of $P$.  This underscores the expected difficulties in a potential proof (or disproof) of \cref{conj}.

\subsection{The Lyons group}\label{sub:lyons}
There is exactly one other sporadic group with order divisible by $5^6$ (or $p^{p+1}$ for $p>3$): the Lyons group $Ly$.

\begin{thm}\label{thm:lyons}
  The maximal subgroup of $Ly$ of the form $5^{1+4}:4.S_6$ has a faithful permutation representation on 3,125 points, given by the action on the cosets of $4.S_6$.  Moreover, this maximal subgroup, $Ly$, and their Sylow 5-subgroups are all non-$FSZ_5$.
\end{thm}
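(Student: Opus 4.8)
The plan is to follow the pattern of \cref{thm:chev,thm:HN}, the new wrinkle being that \verb"AtlasRep" ships no permutation representation for $Ly$, so usable representations must first be manufactured.

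First I would obtain a faithful permutation representation of $Ly$, either by downloading one \citep{LyPerms} or by taking the $111$-dimensional $\mathrm{GF}(5)$ matrix representation supplied by \verb"AtlasRep" and computing the induced action on a short orbit of vectors; call the result $G$. Inside $G$ I would locate the maximal subgroup $M_0$ of shape $5^{1+4}:4.S_6$---there is a single conjugacy class of these, available through the straight-line programs underlying \verb"AtlasSubgroup". Since $M_0$ is a split extension it contains a complement $H\cong 4.S_6$ to $O_5(M_0)=5^{1+4}$, which \verb"ComplementClassesRepresentatives" can produce, and $H$ is core-free: a nontrivial normal subgroup $N$ of $M_0$ contained in $H$ would satisfy $[N,O_5(M_0)]\le N\cap O_5(M_0)=1$, hence $N\le Z(O_5(M_0))\le O_5(M_0)$, forcing $N=1$. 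Thus the action of $M_0$ on the $[M_0:H]=|5^{1+4}|=3125$ cosets of $H$, obtained by applying \verb"FactorCosetAction" to $M_0$ and $H$, is faithful; verifying in GAP that the image has degree $3125$ and order $\numprint{9000000}$ proves the first assertion. From here on $M_0$ denotes this degree-$3125$ copy.

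Because $G_2(5)$ is a maximal subgroup of $Ly$ and its Sylow $5$-subgroup---of shape $5^{1+4}.5$---is a full Sylow $5$-subgroup of both $Ly$ and $M_0$, the non-$FSZ_5$ assertion for the Sylow $5$-subgroups is already contained in \cref{thm:chev}. For the main computation I would nonetheless follow \cref{sub:HN}: take a Sylow $5$-subgroup $P\le M_0$ (order $5^6$), replace it by an isomorphic pc-group for speed, run \verb"FSZtestZ(P)" to recover a central element $g\in P$ and the exponent $m=5$ witnessing the failure, and then run \verb"FSIndPt" on $P$, $5$, and $g$ to obtain a suitable $u$ (the accompanying exponent may be taken to be $2$, since $g$ has order $5$) with $|P_5(u,g)|\neq|P_5(u,g^2)|$. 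Pulling $g$ and $u$ back into the degree-$3125$ group $M_0$, I would set $C=C_{M_0}(g)$. Here $\cyc g=Z(P)=Z(O_5(M_0))$ is characteristic in $M_0$, so $M_0=N_{Ly}(\cyc g)$ by maximality; hence $C_{Ly}(g)\le M_0$ and $C=C_{M_0}(g)=C_{Ly}(g)$, a group with $|C|=\numprint{2250000}=2^4\cdot 3^2\cdot 5^6$ whose Sylow $5$-subgroup is not normal, so the cardinalities must be evaluated over all of $C$. Finally \verb"FSZSetCards(C,u,g,5,2)" returns two unequal numbers; since $(Ly)_5(u,g)=C_5(u,g)=(M_0)_5(u,g)$, and likewise for $g^2$---all three sets being computed inside the common centralizer $C$---this single call shows at once that $C$, $M_0$, and $Ly$ are non-$FSZ_5$, finishing the proof.

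The main obstacle lies entirely at the front end: with no permutation representation of $Ly$ in \verb"AtlasRep", one must download or construct one of large degree and then carefully cut it down to the degree-$3125$ representation of $M_0$, checking group orders along the way; once those representations are available the remaining steps are quick and essentially identical to the $HN$ case, the only recurring structural subtlety---as throughout \cref{sec:simple}---being that the Sylow $5$-subgroups in question are not normal in the relevant centralizers, so that \cref{cor:normalSylow} cannot be invoked and the set cardinalities must be computed directly.
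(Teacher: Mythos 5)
Your proposal is correct, and its computational core---extracting $u,g$ from the Sylow 5-subgroup with \verb"FSZtestZ" and \verb"FSIndPt" and then counting over the centralizer with \verb"FSZSetCards"---is exactly the paper's. Where you genuinely differ is in the two justification steps. For the degree-$3125$ representation, the paper restricts a downloaded permutation representation of $Ly$ on \numprint{8835156} points to the maximal subgroup (via the online ATLAS programs) and then applies \verb"SmallerDegreePermutationRepresentation", observing that the result is the action on the cosets of $4.S_6$; you instead build that coset action directly from a complement produced by \verb"ComplementClassesRepresentatives" and prove faithfulness by a core-freeness argument, which has the advantage of actually proving the first assertion of the theorem rather than recording it as a computational byproduct. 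For the step that makes the count in $C_{M_0}(g)$ compute $|Ly_5(u,g)|$ and $|Ly_5(u,g^2)|$, the paper appeals to the character table of $Ly$ (a unique non-identity class has centralizer of order divisible by $5^6$, of order \numprint{2250000}, forcing $C_{Ly}(g)=C_{M_0}(g)$ once the orders match), whereas you argue structurally that $\cyc{g}=Z(O_5(M_0))$ is characteristic in $M_0$, so by simplicity and maximality $M_0=N_{Ly}(\cyc{g})\supseteq C_{Ly}(g)$. Both routes work; note, however, that your argument quietly uses $C_{M_0}(O_5(M_0))=Z(O_5(M_0))$ twice---once to pass from $N\le C_{M_0}(O_5(M_0))$ to $N\le Z(O_5(M_0))$ in the core-freeness step, and once to know $Z(P)=Z(O_5(M_0))$, i.e.\ that the element returned by \verb"FSZtestZ" really generates $Z(O_5(M_0))$. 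This self-centralizing property holds for this maximal subgroup, but it should be verified in GAP (or argued) rather than asserted; with that check included, your version is a clean and somewhat more self-contained alternative to the paper's character-table argument.
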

\begin{proof}
  It is well-known that $Ly$ contains a copy of $G_2(5)$ as a maximal subgroup, and that the order of $Ly$ is not divisible by $5^7$.  Therefore $Ly$ and $G_2(5)$ have isomorphic Sylow 5-subgroups, and by \cref{thm:chev} this Sylow subgroup is not $FSZ_5$.

  Checking the character table for $Ly$ as before, we find there is a unique non-identity conjugacy class whose corresponding centralizer has order divisible by $5^6$.  In particular, the order of this centralizer is $\numprint{2250000}=2^4\cdot 3^2\cdot 5^6$, and it comes from an element of order $5$.  So any maximal subgroup containing an element of order 5 whose centralizer has this order will suffice.  The maximal subgroup $5^{1+4}:4.S_6$ is the unique such choice.

  The new difficulty here is that, by default, there are only matrix group representations available though the \verb"AtlasRep" package for $Ly$ and $5^{1+4}:4.S_6$, which are ill-suited for our purposes.  However, faithful permutation representations for $Ly$ are known, and they can be constructed through GAP with sufficient memory available provided one uses a well-chosen method.  A detailed description of how to acquire the permutation representation on \numprint{8835156} points, as well as downloads for the generators (including MeatAxe versions courtesy of Thomas Breuer) can be found on the web, courtesy \citet{LyPerms}.

  Using this, we can then obtain a permutation representation for the maximal subgroup $5^{1+4}:4.S_6$ on \numprint{8835156} points using the programs available on the online ATLAS \citep{OnlineAtlas}.  This in turn is fairly easily converted into a permutation representation on a much smaller number of points, provided one has up to 8 GB of memory available, via \verb"SmallerDegreePermutationRepresentation".  The author obtained a permutation representation on $3125$ points, corresponding to the action on the cosets of $4.S_6$.  The exact description of the generators is fairly long, so we will not reproduce them here.  The author is happy to provide them upon request.  One can also proceed in a fashion similar to some of the cases handled in \citep{BrayWilson:M} to find such a permutation representation.

  Once this smaller degree representation is obtained, it is then easy to apply the same methods as before to show the desired claims about the $FSZ_5$ properties.  We can directly compute the Sylow 5-subgroup, then find $u,g$ through \verb"FSZtestZ" and \verb"FSIndPt" irrespectively, set $C$ to be the centralizer of $g$, then run \verb"FSZSetCards(C,u,g,5,2)".  This returns [5000,625], which gives the desired non-$FSZ_5$ claims.
\end{proof}
Indeed, \verb"FSZtest" can be applied to (both) the centralizer and the maximal subgroup once this permutation representation is obtained.  This will complete quickly, thanks to the relatively low orders and degrees involved. We also note that the centralizer $C$ so obtained will not have a normal Sylow 5-subgroup, and is a perfect group.  The maximal subgroup in question is neither perfect nor solvable, and does not have a normal Sylow 5-subgroup.

%%%%%%%%%%%%%%%%%%%%%%%%%%%%%%%%%%%%%%%%%%%%%%%%%%%%%%%%%%%%%%%%%%%%%%%%%%%%%%%%%%%%%%%%%%%%%%
\section{The \texorpdfstring{$FSZ$}{FSZ} sporadic simple groups}\label{sec:fischer}
We can now show that all other sporadic simple groups and their Sylow subgroups are $FSZ$.
\begin{example}
  Any group which is necessarily $FSZ$ (indeed, $FSZ^+$) by \citep[Corollary 5.3]{IMM} necessarily has all of its Sylow subgroups $FSZ$, and so satisfies the conjecture.  This implies that all of the following sporadic groups, as well as their Sylow $p$-subgroups, are $FSZ$ (indeed, $FSZ^+$).
  \begin{itemize}
    \item The Mathieu groups $M_{11},M_{12},M_{22},M_{23},M_{24}$.
    \item The Janko groups $J_1,J_2,J_3,J_4$.
    \item The Higman-Simms group $HS$.
    \item The McLaughlin group $McL$.
    \item The Held group $He$.
    \item The Rudvalis group $Ru$.
    \item The Suzuki group $Suz$.
    \item The O'Nan group $O'N$.
    \item The Conway group $Co_3$.
    \item The Thompson group $Th$.
    \item The Tits group ${}^2 F_4(2)'$.
  \end{itemize}
\end{example}
\begin{example}
  Continuing the last example, it follows that the following are the only sporadic simple groups not immediately in compliance with the conjecture thanks to \citep[Corollary 5.3]{IMM}.
  \begin{itemize}
    \item The Conway groups $Co_1,Co_2$.
    \item The Fischer groups $Fi_{22},Fi_{23},Fi_{24}'$.
    \item The Monster $M$.
    \item The Baby Monster $B$.
    \item The Lyons group $Ly$.
    \item The Harada-Norton group $HN$.
  \end{itemize}
  The previous section showed that the last four groups were all non-$FSZ_5$ and have non-$FSZ_5$ Sylow 5-subgroups, and so conform to the conjecture.  By exponent considerations the Sylow subgroups of the Conway and Fischer groups are all $FSZ^+$.  The function \verb"FSZtest" can be used to quickly show that $Co_1$, $Co_2$, $Fi_{22}$, and $Fi_{23}$ are $FSZ$, and so conform to the conjecture.
\end{example}
This leaves just the largest Fischer group $Fi_{24}'$.
\begin{thm}\label{thm:fi24}
    The sporadic simple group $Fi_{24}'$ and its Sylow subgroups are all $FSZ$.
\end{thm}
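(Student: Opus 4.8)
The plan is to handle the two halves of the statement separately. The assertion about the Sylow subgroups needs nothing beyond what was already recorded in the preceding example: the $2$- and $3$-power element orders of $Fi_{24}'$ are bounded, the primes $5$ and $7$ divide $|Fi_{24}'|$ more than once but the corresponding Sylow subgroups have exponent $5$ and $7$, and every other odd prime divides $|Fi_{24}'|$ exactly once, so each Sylow subgroup of $Fi_{24}'$ satisfies the exponent hypothesis of \citep[Corollary 5.3]{IMM} and is therefore $FSZ^+$, hence $FSZ$. The real work is in showing $Fi_{24}'$ itself is $FSZ$.

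The obstruction there is size: the smallest faithful permutation representation of $Fi_{24}'$ has degree $\numprint{306936}$, so invoking \verb"FSZtest" on $Fi_{24}'$ directly — which would force GAP to compute all of its conjugacy classes and the character tables of numerous centralizers — is impractical on modest hardware. Instead I would run the logic of \verb"FSZtest" against the ordinary character table of $Fi_{24}'$ together with its power maps, both available from GAP's character table library. First I would go through the rational classes of order not in $\{1,2,3,4,6\}$ and, for each representative $z$, read off from the power maps exactly which element orders occur in $C := C_{Fi_{24}'}(z)$, and hence decide whether the list \verb"div" of \verb"FSZtest" is empty. Here a short arithmetic analysis is the crux of the reduction: for a class of order $o$, and writing $e$ for the exponent of $C$, the list \verb"div" is non-empty precisely when $\gcd(o, e/o)$ has a divisor that is at least $5$ and different from $6$; since $Fi_{24}'$ has no elements of order $25$, $49$, $64$ or $81$ and every prime $>3$ divides its exponent only once, such a divisor must be a $3$-smooth number at least $8$, and one checks this forces $o$ to be divisible by $12$ and $C$ to contain an element of order $16$ together with an element of order $9$ or $27$. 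Only a handful of rational classes of $Fi_{24}'$ — possibly none — meet this condition, and these are the only ones requiring a genuine indicator computation.

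For each surviving class $z$ I would obtain the ordinary character table of $C$ — from the character table library if it is stored there, otherwise by identifying the isomorphism type of $C$ from the known $2$- and $3$-local structure of $Fi_{24}'$ (these centralizers are assembled from much smaller groups whose tables are known) and, if need be, by constructing $C$ from explicit generators inside a suitable local subgroup of $Fi_{24}'$ of manageable size rather than inside $Fi_{24}'$ — and then evaluate \verb"beta" on every irreducible character of $C$ for each $m$ in \verb"div", checking that the value is always rational. Once this holds for every surviving class, the logic of \verb"FSZtest" returns \verb"true", and $Fi_{24}'$ is $FSZ$.

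I expect the main obstacle to be this last step: securing the ordinary character tables of the relevant centralizers with certainty, without ever building $Fi_{24}'$ as a group. A secondary point needing care is making the reduction in the second paragraph fully rigorous — one must confirm straight from the stored character-table data of $Fi_{24}'$ that no rational class with non-empty \verb"div" has been missed, rather than trusting a heuristic about the orders of $5$- and $7$-elements.
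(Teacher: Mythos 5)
Your treatment of the Sylow subgroups matches the paper's: both rest on the exponent $2^4 \cdot 3^3 \cdot 5 \cdot 7 \cdot 11 \cdot 13 \cdot 17 \cdot 23 \cdot 29$ of $Fi_{24}'$ together with \citep[Corollary 5.3]{IMM}.

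For $Fi_{24}'$ itself, the arithmetic reduction in your second paragraph is correct, and in fact slightly finer than what the paper records. Writing $C = C_{Fi_{24}'}(z)$ and $o = o(z)$, the list of exponents $m$ to be tested is non-empty only if $\gcd(o,\exp(C)/o)$ has a divisor outside $\{1,2,3,4,6\}$; the square of that gcd divides $\exp(C)$, which divides $2^4\cdot 3^3\cdot(\text{squarefree part})$, so the gcd divides $12$ and must therefore equal $12$. Hence $12 \mid o$, $2^4\cdot 3^2 \mid \exp(C)$, and $C$ contains elements of order $16$ and of order $9$. The paper retains only the weaker consequence that $C$ contains an element of order $16$.

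You stop, however, precisely where the paper becomes concrete, and the "main obstacle" you flag at the end does not actually arise. From the stored character table of $Fi_{24}'$ one reads off that there is a unique class of elements of order divisible by $16$, and that its centralizer has order $32$ (it is $\BZ_{16}\times\BZ_2$). Consequently any $z$ whose centralizer contains an element $x$ of order $16$ lies in $C_{Fi_{24}'}(x)$ and is therefore a $2$-element of order dividing $16$. Combined with your own condition $12 \mid o(z)$, this is impossible: no rational class survives your reduction, and there are no centralizer character tables left to secure. The paper, working only with the weaker "order-$16$" condition, does retain a few classes $z$, of order $8$ or $16$; but it disposes of them via \cref{thm:small-order} rather than by indicator computations, since the corresponding centralizers have orders $32$ and $1536$, both below $2016$. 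Either route closes the argument. What is missing from your write-up is the single concrete structural fact about the centralizer of an order-$16$ element, which is read directly from the character table of $Fi_{24}'$ without ever constructing the group.
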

\begin{proof}
  The exponent of $Fi_{24}'$ can be calculated from its character table and shown to be
  \[ \numprint{24516732240} = 2^4 \cdot 3^3 \cdot 5\cdot 7 \cdot 11 \cdot 13\cdot 17\cdot 23\cdot 29.\]
  As previously remarked, this automatically implies that the Sylow subgroups are all $FSZ$ (indeed, $FSZ^+$).  By \citep[Corollary 5.3]{IMM} it suffices to show that every centralizer of an element with order not in $\{1,2,3,4,6\}$ in $Fi_{24}'$ that contains an element of order 16 is $FSZ$.  There is a unique conjugacy class in $Fi_{24}'$ for an element with order (divisible by) 16.  The centralizer of such an element has order 32, and is isomorphic to $\BZ_{16}\times \BZ_2$.  So it suffices to consider the elements of order 8 in this centralizer, and show that their centralizers (in $Fi_{24}'$) are $FSZ$.  Every such element has a centralizer of order $1536=2^9\cdot 3$.  So by \cref{thm:small-order} the result follows.

  The following is GAP code verifying these claims.
  \begin{lstlisting}
    G := AtlasGroup("Fi24'");;
    GT := CharacterTable("Fi24'");;

    Positions(OrdersClassRepresentatives(GT) mod 16,0);

    exp := Lcm(OrdersClassRepresentatives(GT));
    Collected(FactorsInt(exp));
    SetExponent(G,exp);;

    P := SylowSubgroup(G,2);;

    #There are many ways to get an element of order 16.
    #Here's a very crude, if non-deterministic, one.
    x := Random(P);;
    while not Order(x) = 16 do x:=Random(P); od;

    C := Centralizer(G,x);;

    cents := Filtered(C,y->Order(y)=8);;
    cents := List(cents,y->Centralizer(G,y));;

    List(cents,Size);
  \end{lstlisting}
\end{proof}

The following then summarizes our results on sporadic simple groups.
\begin{thm}\label{thm:summary}
  The following are equivalent for a sporadic simple group $G$.
  \begin{enumerate}
    \item $G$ is not $FSZ$.
    \item $G$ is not $FSZ_5$.
    \item The order of $G$ is divisible by $5^6$.
    \item $G$ has a non-$FSZ$ Sylow subgroup.
    \item The Sylow 5-subgroup of $G$ is not $FSZ_5$.
  \end{enumerate}
\end{thm}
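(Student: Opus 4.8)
The plan is to establish the cycle of implications by combining the results already proved in the paper with the two remaining computational facts about $Fi_{24}'$. The statement partitions the 26 sporadic groups into those with order divisible by $5^6$ — namely $HN$, $Ly$, $B$, $M$ — and all others, so the proof naturally splits along this line. First I would note that among all sporadic simple groups, the only prime $p$ with $p^{p+1}$ dividing the order and $p > 3$ is $p = 5$, and this happens exactly for the four groups listed; moreover $5^7 \nmid |G|$ for $G \in \{HN, Ly, B\}$, while $M$ has a Sylow 5-subgroup of order $5^9$. This is a finite check against the known orders of the sporadic groups.

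For the direction (3) $\Rightarrow$ (5), I would invoke \cref{thm:HN}, \cref{thm:lyons}, the Baby Monster theorem, and \cref{thm:Monster}, each of which explicitly asserts that the corresponding Sylow 5-subgroup is non-$FSZ_5$; and (5) $\Rightarrow$ (4) is immediate, as is (5) $\Rightarrow$ (2) by definition, and (2) $\Rightarrow$ (1) trivially. The implication (1) $\Rightarrow$ (3) — equivalently, that every sporadic $G$ with $5^6 \nmid |G|$ is $FSZ$ — is where the real content lies: by \citep[Corollary 5.3]{IMM} most of them are $FSZ^+$ outright on exponent grounds (the Mathieu, Janko, $HS$, $McL$, $He$, $Ru$, $Suz$, $O'N$, $Co_3$, $Th$, Tits cases), the function \verb"FSZtest" disposes of $Co_1$, $Co_2$, $Fi_{22}$, $Fi_{23}$, and \cref{thm:fi24} handles $Fi_{24}'$. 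Finally (4) $\Rightarrow$ (2) follows because if some Sylow $p$-subgroup of $G$ is non-$FSZ$ then $p^{p+1} \mid |G|$ forces $p = 5$ (sporadic groups have no $p^{p+1}$ for $p = 2, 3$ by the lower bounds $2^{10}$, $3^8$ recorded in the introduction, and one checks directly that $|G|$ is not divisible by any $p^{p+1}$ for $p \geq 7$), and the Sylow 5-subgroup is then one of the four non-$FSZ_5$ examples; alternatively (4) $\Rightarrow$ (1) is immediate and one closes the cycle through (1) $\Rightarrow$ (3) $\Rightarrow$ (5) $\Rightarrow$ (4).

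The main obstacle is bookkeeping rather than mathematics: one must verify that for $p \in \{2, 3, 7, 11, 13, \dots\}$ no sporadic group has $p^{p+1}$ dividing its order, so that a non-$FSZ$ Sylow subgroup can only be the Sylow 5-subgroup. For $p = 2, 3$ this is covered by the remark in the introduction that no non-$FSZ$ $2$-group of order below $2^{10}$ and no non-$FSZ$ $3$-group of order below $3^8$ exists, together with the fact that no sporadic group has $2^{10}$... wait, several do — so here one instead uses the exponent bound: the Sylow $2$- and $3$-subgroups of the relevant groups have small enough exponent that \citep[Corollary 5.3]{IMM} applies, exactly as invoked for the Conway and Fischer groups in the preceding examples; and for $p \geq 7$ one checks $p^{p+1} \nmid |G|$ directly from the order formulas. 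Assembling these pieces, every implication reduces to a cited theorem or a finite verification, and the equivalence follows.
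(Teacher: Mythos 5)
The paper's own proof is a single sentence — ``Combine the results of this section and the previous one'' — and your proposal is essentially a spelled-out version of that combination, so the approach is the same.  The structure of your implication cycle $(1)\Rightarrow(3)\Rightarrow(5)\Rightarrow(4)\Rightarrow(2)\Rightarrow(1)$ is sound, and the citations you draw on (the four theorems of \cref{sec:simple} for the groups with $5^6$ dividing the order, Corollary~5.3 of \citet{IMM} plus \texttt{FSZtest} plus \cref{thm:fi24} for the rest) are exactly the ingredients the paper intends.

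However, two of your parenthetical remarks are wrong and worth flagging because they conflate the theorem with the open \cref{conj}.  You write ``$(5)\Rightarrow(2)$ by definition'' and, later, ``$(4)\Rightarrow(1)$ is immediate.''  Neither is immediate or definitional: the assertion that a group with a non-$FSZ$ Sylow subgroup must itself be non-$FSZ$ is precisely the content of \cref{conj}, which is \emph{not} proved in the paper and is the whole reason the authors had to verify both the Sylow subgroup \emph{and} the ambient group in each of \crefrange{thm:chev}{thm:lyons}.  For the sporadic groups these implications do hold, but only because the computations establish both sides simultaneously — they are empirical facts for this finite list, not formal consequences.  Since your main cycle already routes around these two claims (you get $(5)\Rightarrow(2)$ and $(4)\Rightarrow(1)$ indirectly through the cycle), your proof still closes, but you should delete or correct those two side remarks so as not to tacitly assume the conjecture.

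One smaller point: your first attempt at $(4)\Rightarrow(2)$ begins by asserting ``$p^{p+1}\mid|G|$ forces $p=5$,'' which you immediately notice is false for $p=2,3$ (many sporadic groups have $2^{10}$ or $3^8$ dividing their order).  Your self-correction to exponent bounds via \citep[Corollary 5.3]{IMM} is the right fix, but note that the cleanest way to finish $(4)\Rightarrow(2)$ is simply: the paper shows all Sylow subgroups of the $22$ sporadic groups with $5^6\nmid|G|$ are $FSZ$, so $(4)$ forces $G\in\{HN,Ly,B,M\}$, and for those four the theorems give $(2)$ directly — no need to identify the offending prime at all.
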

\begin{proof}
  Combine the results of this section and the previous one.
\end{proof}

%%%%%%%%%%%%%%%%%%%%%%%%%%%%%%%%%%%%%%%%%%%%%%%%%%%%%%%%%%%%%%%%%%%%%%%%%%%%%%%%%%%%%%%%%%%%%%%%%%%%%%%%%%
\section{The symplectic group \texorpdfstring{\ensuremath{S_6(5)}}{PSp(6,5)}}\label{sec:symplectic}
In \citep{PS16} it was mentioned that the symplectic group $S_6(5)$ was likely to be the second smallest non-$FSZ$ simple group, after $G_2(5)$.  Computer calculations there ran into issues when checking a particular centralizer, as the character table needed excessive amounts of memory to compute.  Our methods so far also place this group at the extreme end of what's reasonable.  In principle the procedure and functions we've introduced so far can decide that this group is non-$FSZ$ in an estimated two weeks of uninterrupted computations, and with nominal memory usage.  However, we can achieve a substantial improvement that completes the task in about 8 hours (on two processes; 16 hours for a single process), while maintaining nominal memory usage.

The simple yet critical observation comes from \citep[Definition 3.3]{PS16}.  In particular, if $a\in G_m(u,g)$, then $a^m=g$ implies that for all $b\in \class_{C_G(g)}(a)$ we have $b^m=g$.  So while \verb"FSZSetCards" acts as naively as possible and iterates over all elements of $C=C_G(g)$, we in fact need to only iterate over the elements of those conjugacy classes of $C$ whose $m$-th power is $g$ (or $g^n$).  GAP can often compute the conjugacy classes of a finite permutation or polycyclic group quickly and efficiently.  So while it is plausible that finding these conjugacy classes can be too memory intensive for certain centralizers, there will nevertheless be centralizers for which all other methods are too impractical for either time or memory reasons, but for which this reduction to conjugacy classes makes both time and memory consumption a non-issue.  The otherwise problematic centralizer of $S_6(5)$ is precisely such a case, as we will now see.

\begin{thm}\label{thm:symplectic}
  The projective symplectic group $S_6(5)$ and its Sylow $5$-subgroup are both non-$FSZ_5$.
\end{thm}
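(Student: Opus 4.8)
The plan is to follow the template used throughout \cref{sec:simple} — exhibit an explicit failure of $FSZ_5$ inside a Sylow $5$-subgroup, then lift it to the whole group through a centralizer containing that Sylow subgroup — with the conjugacy-class reduction described just above substituted for the naive sweep in \verb"FSZSetCards". First I would obtain a permutation representation of $G=S_6(5)$; one is available through \verb"AtlasRep", and in any case the action on the $3906$ points of the natural symplectic $6$-space over $\mathbb F_5$ is a convenient low-degree option (this action factors through $PSp_6(5)$ and is faithful there). I would then compute a Sylow $5$-subgroup $P$, whose order is exactly $5^9$ — the same as the Monster's Sylow $5$-subgroup — and pass to a polycyclic presentation via \verb"IsomorphismPcGroup" for efficiency. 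Running \verb"FSZtestZ" on $P$ should return a central element $g\in Z(P)$, which I expect (as in every earlier case) to have order $5$, at which $P$ fails $FSZ_5$; this already establishes that the Sylow $5$-subgroup is non-$FSZ_5$.

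Next I would feed $g$ to \verb"FSIndPt" to obtain a witness $u\in P$ and an exponent $n$ — anticipated to be $n=2$, as for $G_2(5)$, $HN$, and the other groups when $o(g)=5$ — with $|P_5(u,g)|\neq|P_5(u,g^n)|$. Pulling $u$ and $g$ back into $G$ and setting $C=C_G(g)$, we have $|G_5(u,g)|=|C_5(u,g)|$ and $|G_5(u,g^n)|=|C_5(u,g^n)|$ because $u\in C$, so everything reduces to counting inside $C$ (after first shrinking its degree with \verb"SmallerDegreePermutationRepresentation"). The key step — and the main obstacle — is carrying out this count for a centralizer on which both standard tools fail: \verb"FSZtest" on $C$ exhausted $128$ GB of memory in \citep{PS16} while building the character table, and a brute-force \verb"FSZSetCards" pass over all of $C$ is projected at roughly two weeks. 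To circumvent this I would use that $a\mapsto a^m$ carries $C$-conjugates of $a$ to $C$-conjugates of $a^m$, and since $g\in Z(C)$ this means $a^m=g$ holds for all or for none of the elements of $\class_C(a)$: so I would compute \verb"ConjugacyClasses(C)", discard every class whose representative has $m$-th power neither $g$ nor $g^n$, and then tally over the surviving classes only — adding a whole class size to the relevant counter whenever the representative also satisfies $(au\inv)^m\in\{g,g^n\}$, and otherwise scanning that single class elementwise for the members that do. Distributing the surviving classes (or the two residues $g,g^n$) over two processes brings the running time down to several hours.

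The residual risk is that \verb"ConjugacyClasses(C)" is itself too expensive, or that too many classes survive the $m$-th power filter to enumerate quickly; but $C$ is a permutation (equivalently polycyclic) group of tractable size for which GAP computes conjugacy classes readily, and the $m$-th power test eliminates the overwhelming majority of classes at once, so I expect the reduction to be decisive. The outcome should be a pair of unequal cardinalities — plausibly with one of them $0$, as for $G_2(5)$, $HN$, and $M$ in \cref{thm:chev,thm:HN,thm:Monster} — yielding $|G_5(u,g)|\neq|G_5(u,g^n)|$ and hence that $S_6(5)$ is non-$FSZ_5$, which completes the proof.
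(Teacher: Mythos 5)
Your overall route is the paper's: obtain $g$ from \texttt{FSZtestZ} applied to a polycyclic copy of the Sylow $5$-subgroup $P$ (which already gives the non-$FSZ_5$ claim for $P$), obtain $u$ (with $n=2$) from \texttt{FSIndPt}, pass to $C=C_G(g)$ in a smaller-degree permutation representation, and replace the naive sweep of \texttt{FSZSetCards} by one restricted to the conjugacy classes of $C$ whose fifth power is $g$ or $g^2$. In the actual computation there is exactly one such class for each of $g$ and $g^2$, each with about $234$ million elements, and the counts come out $|G_5(u,g)|=\numprint{1875000}$ versus $|G_5(u,g^2)|=\numprint{375000}$ (so, contrary to your guess, neither is $0$).

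However, one step of your tallying scheme is a genuine error. The class reduction rests on the fact that the condition $a^5=g$ is constant on $C$-conjugacy classes, which holds because $g\in Z(C)$: if $a^5=g$ then $(a^x)^5=g^x=g$ for all $x\in C$. The second defining condition of $G_5(u,g)$, namely $(au\inv)^5=g$, is \emph{not} constant on $C$-classes, because $u$ is not central in $C$; conjugating $a$ by $x\in C$ while leaving $u$ fixed does not carry $(au\inv)^5$ to $g$ unless $x$ also centralizes $u$. So you cannot ``add a whole class size to the relevant counter whenever the representative also satisfies $(au\inv)^5\in\{g,g^2\}$'': that branch would wildly overcount, since the surviving classes have roughly $2.34\times 10^{8}$ elements while the true cardinalities are four orders of magnitude smaller. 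The correct procedure, and what the paper does, is to filter the classes by the power condition and then scan \emph{every} element of the surviving classes for the $u$-condition; that elementwise scan is precisely where the $\approx 8$ hours per count is spent. The $u$-condition is constant only on orbits of $D=C_C(u)=C_G(u,g)$, and the paper's lemma immediately after the theorem explains why that finer, valid reduction is impractical here: partitioning a $234$-million-element class into $D$-orbits forces GAP to enumerate and store the class, requiring well over a terabyte of memory. With the elementwise scan substituted for your class-size shortcut, your argument coincides with the paper's proof.
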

\begin{proof}
As usual, our first task is to show that the Sylow $5$-subgroup is non-$FSZ_5$, and then use the data obtained from that to attack $S_6(5)$.
  \begin{lstlisting}
    G := AtlasGroup("S6(5)");;
    P := SylowSubgroup(G,5);;
    isoP := IsomorphismPcGroup(P);;
    P := Image(isoP);;

    #Show P is non-FSZ_5, and
    #get the g we need via FSZtestZ
    g := FSZtestZ(P)[2];

    #Get the u we need via FSIndPt
    u := FSIndPt(P,5,g)[1] ;
  \end{lstlisting}
  One can of course store the results of \verb"FSZtestZ" and \verb"FSIndPt" directly to see the complete data returned, and then extract the specific data need.

  We can then show that $G=S_6(5)$ is itself non-$FSZ_5$ by computing $G_5(u,g)$ and $G_5(u,g^2)$ with the following code.
  \begin{lstlisting}
    G := Centralizer(G,g);;
    isoG := SmallerDegreePermutationRepresentation(G);;
    G := Image(isoG);;
    g := Image(isoG,g);;
    u := Image(isoG,u);;
    uinv := Inverse(u);;

    #Now we compute the conjugacy classes
    # of the centralizer.
    cl := ConjugacyClasses(G);;

    #We then need only consider those
    # classes with a suitable 5-th power
    cand1 := Filtered(cl,x->Representative(x)^5=g);;
    cand2 := Filtered(cl,x->Representative(x)^5=g^2);;

    #There is in fact only one conjugacy
    # class in both cases.
    Length(cand1);
    Length(cand2);
    cand1 := cand1[1];;
    cand2 := cand2[1];;

    #The following computes |G_5(u,g)|
    Number(cand1,x->(x*uinv)^5=g);

    #The following computes |G_5(u,g^2)|
    Number(cand2,x->(x*uinv)^5=g^2);
  \end{lstlisting}
  This code shows that \begin{align*} |G_5(u,g)|&=\numprint{1875000};\\|G_5(u,g^2)|&=\numprint{375000}.\end{align*}
  Therefore $S_6(5)$ is non-$FSZ_5$, as desired.
\end{proof}
The calculation of $|G_5(u,g)|$ takes approximately 8.1 hours, and the calculation of $|G_5(u,g^2)|$ takes approximately 7.45 hours.  The remaining calculations are done in significantly less combined time.  We note that the calculations of these two cardinalities can be done independently, allowing each one to be calculated simultaneously on separate GAP processes.

We also note that the centralizer in $S_6(5)$ under consideration in the above is itself a perfect group; is a permutation group of degree 3125 and order 29.25 billion; and has a non-normal Sylow 5-subgroup.  Moreover, it can be shown that the $g$ we found yields the only rational class of $P$ at which $P$ fails to be $FSZ$.  One consequence of this, combined with the character table of $S_6(5)$, is that, unlike in the case of the Monster group, we are unable to switch to any other centralizer with a smaller Sylow 5-subgroup to demonstrate the non-$FSZ_5$ property.

Similarly as with the Baby Monster group, it is interesting to note that $|P_5(u,g)|=\numprint{62500}$ and $|P_5(u,g^2)|=0$ for $P, u, g$ as in the proof.  These cardinalities can be quickly computed exactly as they were for $S_6(5)$, simply restricted to $P$, or using the slower \verb"FSZSetCards", with the primary difference being that now there are multiple conjugacy classes to check and sum over.

Before continuing on to the next section, where we consider small order perfect groups available in GAP, we wish to note a curious dead-end, of sorts.

\begin{lem}
Given $u,g\in G$ with $[u,g]=1$, let $C=C_G(g)$, $D=C_C(u)$, and $m\in\BN$.  Then $a\in G_m(u,g)$ if and only if $a^d\in G_m(u,g)$ for some/any $d\in D$.
\end{lem}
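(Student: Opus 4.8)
The plan is to show that conjugation by any $d\in D$ is a bijection of $G$ that preserves membership in $G_m(u,g)$, essentially because $d$ commutes with \emph{both} ingredients defining this set. Note first that $D=C_C(u)=C_G(g)\cap C_G(u)$ is the two-element centralizer $C_G(u,g)$, and it is a genuine subgroup precisely because $[u,g]=1$ guarantees $u\in C=C_G(g)$. Writing $a^d=d\inv a d$ as usual, the one identity I would record is that, since $d$ centralizes $u$ (hence $u\inv$), the map $a\mapsto au\inv$ is equivariant under conjugation by $d$:
\[ (au\inv)^d = d\inv(au\inv)d = (d\inv a d)(d\inv u\inv d) = a^d u\inv, \]
where the last equality uses $d\inv u\inv d = u\inv$.

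With this in hand, the two defining $m$-th powers transform in the obvious way. Since conjugation by $d$ is an automorphism of $G$, it commutes with taking $m$-th powers, so $(a^d)^m=(a^m)^d$ and, using the displayed identity,
\[ (a^d u\inv)^m = \bigl((au\inv)^d\bigr)^m = \bigl((au\inv)^m\bigr)^d. \]
First I would substitute these into \cref{df:gm-sets}: the element $a^d$ lies in $G_m(u,g)$ exactly when $(a^m)^d=g$ and $\bigl((au\inv)^m\bigr)^d=g$. Because $d\in D\subseteq C_G(g)$ centralizes $g$, for \emph{any} $x\in G$ we have $x^d=g$ if and only if $x=g$; applying this with $x=a^m$ and with $x=(au\inv)^m$ shows that $a^d\in G_m(u,g)$ if and only if $a^m=g$ and $(au\inv)^m=g$, i.e. if and only if $a\in G_m(u,g)$.

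This establishes the equivalence for every fixed $d\in D$ simultaneously, so the ``some/any'' phrasing follows formally: the truth value of ``$a^d\in G_m(u,g)$'' does not depend on the choice of $d\in D$, and taking $d=1$ recovers the assertion about $a$ itself. I do not anticipate a genuine obstacle here; the only points requiring care are bookkeeping ones, namely making sure both hypotheses $[d,u]=1$ and $[d,g]=1$ are invoked --- the former to slide $d$ past $u\inv$ in the displayed identity, the latter to fix the target $g$ under conjugation --- and observing that these are exactly the conditions packaged in $d\in C_C(u)$.
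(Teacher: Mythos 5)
Your argument is correct and is exactly the computation the paper has in mind: it cites the fact as an elementary consequence of $D=C_G(u,g)$ centralizing both $u$ and $g$, and your verification that conjugation by $d$ fixes $g$ and commutes with $a\mapsto au\inv$ and with $m$-th powers is precisely that observation spelled out. No gap; nothing further needed.
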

\begin{proof}
This is noted by \citet{IMM} when introducing the concept of an $FSZ^+$ group.  It is an elementary consequence of the fact that $D=C_G(u,g)$ centralizes both $g$ and $u$ by definition.
\end{proof}

So suppose we have calculated those conjugacy classes in $C$ whose $m$-th power is $g$.  As in the above code, we can iterate over all elements of these conjugacy classes in order to compute $|G_m(u,g)|$.  However, the preceding lemma shows that we could instead partition each such conjugacy class into orbits under the $D$ action.  The practical upshot then being that we need only consider a single element of each orbit in order to compute $|G_m(u,g)|$.

In the specific case of the preceding theorem, we can show that the single conjugacy classes \verb"cand1" and \verb"cand2" both have precisely 234 million elements, and that $D$ is a non-abelian group of order \numprint{75000}, and is in fact the full centralizer of $u$ in $S_6(5)$.  Moreover the center of $C$ is generated by $g$, and so has order $5$.  Thus in the best-case scenario partitioning these conjugacy classes into $D$ orbits can result in orbits with $|D/Z(C)|=\numprint{15000}$ elements each.  The cardinalities we computed can also be observed to be multiples \numprint{15000}.  That would constitute a reduction of more than four orders of magnitude on the total number of elements we would need to check.  While this is a best-case scenario, since $D$ also has index \numprint{390000} in $C$ it seems very plausible that such a partition would produce a substantial reduction in the number of elements to be checked.  So provided that calculating these orbits can be done reasonably quickly, we would expect a significant reduction in run-time.

There is a practical problem, however.  The problem being that, as far as the author can tell, there is no efficient way for GAP to actually compute this partition.  Doing so evidently requires that GAP fully enumerate and store the conjugacy class in question.  In our particular case, a conjugacy class of 234 million elements in a permutation group of degree 3125 simply requires far too much memory---in excess of 1.5 terabytes.  As such, while the lemma sounds promising, it seems to be lacking in significant practical use for computer calculations.  It seems likely, in the author's mind, that any situation in which it is useful could have been handled in reasonable time and memory by other methods. Nevertheless, the author cannot rule out the idea as a useful tool.

%%%%%%%%%%%%%%%%%%%%%%%%%%%%%%%%%%%%%%%%%%%%%%%%%%%%%%%%%%%%%%%%%%%%%%%%%%%%%%%%%%%%%%%%%%%%%%%%%%%%%%%%%%
\section{Perfect groups of order less than \texorpdfstring{$10^6$}{1,000,000}}\label{sec:perfect}
We now look for examples of additional non-$FSZ$ perfect groups.  The library of perfect groups stored by GAP has most perfect groups of order less than $10^6$, with a few exceptions noted in the documentation.  So we can iterate through the available groups, of which there are 1097 at the time this paper was written.  We can use the function \verb"IMMtests" from \cref{sec:functions} to show that most of them are $FSZ$.

\begin{lstlisting}
    #Get all available sizes
    Glist := Filtered(SizesPerfectGroups(),
                n->NrPerfectLibraryGroups(n)>0);;

    #Get all available perfect groups
    Glist := List(Glist,
        n->List([1..NrPerfectLibraryGroups(n)],
        k->PerfectGroup(IsPermGroup, n, k)));;

    Glist := Flat(Glist);;

    #Remove the obviously FSZ ones
    Flist := Filtered(Glist, G->not IMMtests(G)=true);;
\end{lstlisting}
This gives a list of 63 perfect groups which are not immediately dismissed as being $FSZ$.

\begin{thm}
  Of the 1097 perfect groups of order less than $10^6$ available through the GAP perfect groups library, exactly 7 of them are not $FSZ$, all of which are extensions of $A_5$.  All seven of them are non-$FSZ_5$.  Four of them have order $\numprint{375000} = 2^3\cdot 3 \cdot 5^6$, and three of them have order $\numprint{937500} = 2^2\cdot 3 \cdot 5^7$.  Their perfect group ids in the library are:

  \begin{align*}
    [375000,2],&& [375000,8],&& [375000,9],&&
    [375000,11],\\
    [937500,3],&&
    [937500,4],&&
    [937500,5]\hphantom{,}&&
  \end{align*}
\end{thm}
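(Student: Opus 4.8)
The plan is purely computational and closely parallels the census behind \cref{thm:small-order}. We have already cut the library of $1097$ perfect groups of order below $10^6$ down to the $63$ entries of \verb"Flist" that \verb"IMMtests" did not immediately certify as $FSZ$; since \verb"FSZtest" of \cref{sec:functions} is a definitive test, running it on each of these $63$ groups decides the $FSZ$ property for the whole library, and in particular pins down the non-$FSZ$ ones exactly.

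First I would iterate \verb"FSZtest" over \verb"Flist" and sort the outputs into those equal to \verb"true" and those that are a list \verb"[m,z,chi,CT]". I expect $56$ groups to return \verb"true"---most disposed of by the internal calls to \verb"IMMtests" on their centralizers together with \cref{thm:small-order}, the rest by the explicit \verb"beta"/character-table loop inside \verb"FSZtest"---and $7$ groups to return a list. For each of the $7$ exceptional groups I would then: (i) confirm that the failure already occurs at $m=5$, hence non-$FSZ_5$, by using \verb"FSZtestZ" and \verb"FSIndPt" on the Sylow $5$-subgroup $P$ to produce $u,g\in P$ with $|P_5(u,g)|\neq|P_5(u,g^2)|$ and then checking $|G_5(u,g)|\neq|G_5(u,g^2)|$ over $C=C_G(g)$, exactly as in \cref{thm:chev,thm:HN,thm:lyons}; (ii) read off \verb"Size" to see that four have order $\numprint{375000}=2^3\cdot 3\cdot 5^6$ and three have order $\numprint{937500}=2^2\cdot 3\cdot 5^7$; (iii) recover the library indices $[n,k]$ (via \verb"PerfectIdentification", or from the construction loop) and match them against the seven ids in the statement; and (iv) compute \verb"CompositionFactors" to see that $A_5$ is the unique nonabelian composition factor of each group---equivalently, $G$ modulo its solvable radical is $A_5$---so every one of the seven is an extension of $A_5$. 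The last point is unsurprising, as $A_5$ is the only nonabelian simple group whose order divides $\numprint{375000}$ or $\numprint{937500}$.

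The genuine obstacle is run-time and memory, not mathematics. For the three groups of order $\numprint{937500}=12\cdot 5^7$ the Sylow $5$-subgroup has order $5^7$---larger than the $5^6$ appearing in \cref{sec:simple}, so none of the earlier witnesses can be reused---and the centralizer $C=C_G(g)$ of a central element $g\in P$ is the expensive object, its ordinary character table being potentially costly to compute inside \verb"FSZtest". For these I would fall back on the conjugacy-class strategy from the proof of \cref{thm:symplectic}: compute the conjugacy classes of $C$, keep only those whose fifth power is $g$ or $g^2$, and count the elements $a$ with $(au\inv)^5$ equal to $g$ or $g^2$ directly; since every group here has order below $10^6$ this keeps memory usage negligible. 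The truly slow part, though, is not these $7$ groups but certifying that the other $56$ candidates really are $FSZ$, which---as with the small-order census of \cref{thm:small-order}---is best spread across several processors over a few days.
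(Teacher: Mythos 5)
Your proposal is correct and follows essentially the same route as the paper: the paper's proof simply applies \verb"FSZtest" to the 63 groups in \verb"Flist" (about two days of computation), which already settles the count, the orders, the library ids, and---since the returned data includes the offending $m=5$ and $z$---the non-$FSZ_5$ claim. Your additional steps (re-deriving witnesses via \verb"FSZtestZ"/\verb"FSIndPt" on Sylow 5-subgroups, the conjugacy-class fallback for the order-$\numprint{937500}$ cases) are reasonable safeguards but are not needed, as \verb"FSZtest" completes directly on all candidates.
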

\begin{proof}
  Continuing the preceding discussion, we can apply \verb"FSZtest" to the 63 groups in \verb"Flist" to obtain the desired result.  This calculation takes approximately two days of total calculation time on the author's computer, but can be easily split across multiple GAP instances.  Most of the time is spent on the $FSZ$ groups of orders \numprint{375000} and \numprint{937500}.
\end{proof}

On the other hand, we can also consider the Sylow subgroups of all 1097 available perfect groups, and test them for the $FSZ$ property.

\begin{thm}
  If $G$ is one of the 1097 perfect groups of order less than $10^6$ available through the GAP perfect groups library, then the following are equivalent.
  \begin{enumerate}
    \item $G$ is not $FSZ$.
    \item $G$ has a non-$FSZ$ Sylow subgroup.
    \item $G$ has a non-$FSZ_5$ Sylow 5-subgroup.
    \item $G$ is not $FSZ_5$.
  \end{enumerate}
\end{thm}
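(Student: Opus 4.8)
The plan is to reduce the four-way equivalence to the computation already carried out in the preceding theorem, supplemented by a routine sweep over the Sylow subgroups of the $1097$ library groups. Two of the implications are formal: since $FSZ$ means $FSZ_m$ for every $m$, a non-$FSZ_5$ group is in particular non-$FSZ$, giving $(4)\Rightarrow(1)$; and a non-$FSZ_5$ Sylow $5$-subgroup is in particular a non-$FSZ$ Sylow subgroup, giving $(3)\Rightarrow(2)$. The preceding theorem says that the only non-$FSZ$ groups among the $1097$ are the seven listed and that all seven are non-$FSZ_5$; hence $(1)\Rightarrow(4)$ as well, and $(1)\iff(4)$. It therefore remains to prove $(1)\iff(3)$ and $(2)\Rightarrow(3)$, after which all four statements are equivalent.

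For $(1)\Rightarrow(3)$ I would, for each of the seven non-$FSZ$ groups, form a Sylow $5$-subgroup $P$ and run \verb"FSZtest" on it, checking that the witness it returns is an $FSZ_5$-failure (i.e.\ has $m=5$). Here $P$ has order $5^6$ for the four groups of order $\numprint{375000}=2^3\cdot 3\cdot 5^6$ and order $5^7$ for the three of order $\numprint{937500}=2^2\cdot 3\cdot 5^7$; in the former case $P$ has minimal order in the class of non-$FSZ_5$ $5$-groups, so by \cref{cor:minorder} the central-class test \verb"FSZtestZ" is already conclusive, while in the latter case a direct call to \verb"FSZtest" still finishes quickly at this order. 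For $(3)\Rightarrow(1)$ note first that a non-$FSZ_5$ $5$-group has order at least $5^6$: groups of order at most $5^5=p^p$ are regular and hence $FSZ$, and in fact $5^6$ is the minimal order of a non-$FSZ_5$ $5$-group, as recalled in \cref{sec:notation}. Inspecting \verb"SizesPerfectGroups" shows that $\numprint{375000}$ and $\numprint{937500}$ are the only orders below $10^6$ with available perfect groups that are divisible by $5^6$, so one runs \verb"FSZtest" on a Sylow $5$-subgroup of every perfect group of these two orders and observes that exactly the seven groups of the preceding theorem acquire a non-$FSZ_5$ Sylow $5$-subgroup. Since those seven are precisely the non-$FSZ$ ones, this gives $(3)\Rightarrow(1)$.

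For $(2)\Rightarrow(3)$ it suffices to check that, across all $1097$ groups, the only non-$FSZ$ Sylow subgroups are the Sylow $5$-subgroups of those seven groups, each of which is non-$FSZ_5$ by the previous paragraph. For a prime $p\geq 7$ a non-$FSZ$ $p$-group has order at least $p^{p+1}\geq 7^8>10^6$, so no Sylow $p$-subgroup of a library group is non-$FSZ$; and a Sylow $5$-subgroup of order at most $5^5$, a Sylow $3$-subgroup of order less than $3^8$, and a Sylow $2$-subgroup of order less than $2^{10}$ are $FSZ$ by \cref{thm:small-order} and the known minimal-order bounds. The remaining Sylow subgroups---the $2$-subgroups of order at least $2^{10}$, the $3$-subgroups of order at least $3^8$, and the $5$-subgroups of order $5^6$ or $5^7$---I would feed to \verb"IMMtests", which disposes of the $2$- and $3$-subgroups via its regularity and exponent criteria (the library perfect groups with a large $2$- or $3$-part have Sylow subgroups of small exponent), and to \verb"FSZtest" for the finitely many $5$-subgroups not yet settled; the upshot is that no Sylow $2$- or $3$-subgroup is non-$FSZ$ and the non-$FSZ$ Sylow $5$-subgroups are exactly the seven already identified, establishing $(2)\Rightarrow(3)$.

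The main obstacle is purely one of computational bookkeeping rather than of new mathematics: one must actually run \verb"FSZtest" on the Sylow $5$-subgroups of all perfect groups of orders $\numprint{375000}$ and $\numprint{937500}$, and confirm that \verb"IMMtests" really does clear every Sylow $2$- and $3$-subgroup that could a priori fail---a genuine concern only if some library group has a Sylow $2$-subgroup of order $\geq 2^{11}$ or a Sylow $3$-subgroup of order $\geq 3^9$ of large exponent, in which case \verb"IMMtests" returns \verb"fail" and the slower \verb"FSZtest" must be invoked. Since the $5$-local data was essentially already assembled in proving the preceding theorem, what this result records is that, for the perfect groups in the library, the Sylow-subgroup side of \cref{conj} holds and is moreover entirely controlled by the prime $5$.
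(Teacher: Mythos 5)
Your proposal follows essentially the same route as the paper: the formal implications together with the preceding theorem reduce everything to a computational sweep of the Sylow subgroups of the 1097 library groups using \texttt{IMMtests}, \texttt{FSZtestZ} and \texttt{FSZtest}, and the outcomes you predict (only the seven non-$FSZ$ groups have non-$FSZ$ Sylow subgroups, all at the prime $5$ and all non-$FSZ_5$) are exactly what the paper's computations report. However, one computational fact you assert is wrong: $\numprint{375000}$ and $\numprint{937500}$ are \emph{not} the only orders below $10^6$ in the perfect groups library that are divisible by $5^6$. For example, $A_5\cong PSL_2(5)$ has an irreducible five-dimensional module over $\mathrm{GF}(5)$ (the Steinberg module), and the resulting split extension $5^5\rtimes A_5$ is a perfect group of order $\numprint{187500}=2^2\cdot 3\cdot 5^6$; this order is not among the library's documented omissions, so such groups are among the 1097 and their Sylow $5$-subgroups have order $5^6$. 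As literally written, your $(3)\Rightarrow(1)$ argument therefore skips Sylow $5$-subgroups that must be checked: a priori one of them could be non-$FSZ_5$ while its ambient group is $FSZ$, which would destroy the equivalence.

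Fortunately your own $(2)\Rightarrow(3)$ sweep feeds \emph{every} Sylow $5$-subgroup of order $5^6$ or $5^7$ to the tests regardless of the ambient order, and the paper's blanket check (it runs \texttt{IMMtests} over all Sylow subgroups of all 1097 groups and finishes the 256 inconclusive cases with \texttt{FSZtest}/\texttt{FSZtestZ}) confirms that these additional Sylow $5$-subgroups are $FSZ$. So the argument closes, provided you delete the erroneous inspection claim and instead derive $(3)\Rightarrow(1)$ from the full sweep, as the paper implicitly does. A minor further point: to show the Sylow $5$-subgroups of the seven non-$FSZ$ groups are non-$FSZ_5$ you do not need \cref{cor:minorder} or any conclusiveness of \texttt{FSZtestZ}; any returned witness with $m=5$ already proves the failure, which is how the paper handles both the order $5^6$ and the order $5^7$ cases.
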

\begin{proof}
  Most of the GAP calculations we need to perform now are quick, and the problem is easily broken up into pieces, should it prove difficult to compute everything at once.  The most memory intensive case requires about 1.7 GB to test.  With significantly more memory available than this, the cases can simply be tested by \verb"FSZtest" en masse, which will establish the result relatively quickly---a matter of hours.  We sketch the details here and leave it to the interested reader to construct the relevant code.  Recall that it is generally worthwhile to convert $p$-groups into polycyclic groups in GAP via \verb"IsomorphismPcGroup".

  Let \verb"Glist" be constructed in GAP as before.  Running over each perfect group, we can easily construct their Sylow subgroups.  We can then use \verb"IMMtests" from \cref{sec:functions} to eliminate most cases. There are 256 Sylow subgroups, each from a distinct perfect group, for which \verb"IMMtests" is inconclusive; and there are exactly 4 cases where \verb"IMMtests" definitively shows the non-$FSZ$ property, which are precisely the Sylow 5-subgroups of each of the non-$FSZ$ perfect groups of order \numprint{375000}.  These 4 Sylow subgroups are all non-$FSZ_5$.  We can also apply \verb"FSZtestZ" to the Sylow 5-subgroups of the non-$FSZ$ perfect groups of order \numprint{937500} to conclude that they are all non-$FSZ_5$.  All other Sylow subgroups remaining that come from a perfect group of order less than 937,500 can be shown to be $FSZ$ by applying \verb"FSZtest" without difficulty.  Of the three remaining Sylow subgroups, one has a direct factor of $\BZ_5$, and the other factor is easily tested and shown to be $FSZ$, whence this Sylow subgroup is $FSZ$.  This leaves two other cases, which are the Sylow 5-subgroups of the perfect groups with ids [937500,7] and [937500,8].  The second of these is easily shown to be FSZ by \verb"FSZtest".  The first can also be tested by \verb"FSZtest", but this is the case that requires the most memory and time---approximately 15 minutes and the indicated 1.7 GB.   In this case as well the Sylow subgroups are $FSZ$.  This completes the proof.
\end{proof}

\bibliographystyle{plainnat}
\bibliography{../references}

\end{document}